\newtheorem{theorem}{Theorem}[section]
\newtheorem{lemma}[theorem]{Lemma}
\newtheorem{proposition}[theorem]{Proposition}
\newtheorem{definition}[theorem]{Definition}
\newtheorem{remark}[theorem]{Remark}
\newtheorem{question}[theorem]{Question}
\renewcommand{\geq}{\geqslant}
\renewcommand{\leq}{\leqslant}
\newcommand{\Pro}{\mathbb{P}}
\newcommand{\PR}{\mathbb{P}}
\newcommand{\Z}{\mathbb{Z}}
\newcommand{\N}{\mathbb{N}}
\newcommand{\Aut}{\operatorname{Aut}}
\newcommand{\ee}{\textbf{\textup{e}}}
\newcommand{\xx}{\textbf{\textup{x}}}
\newcommand{\calB}{\mathcal{B}}
\newcommand{\calC}{\mathcal{C}}
\newcommand{\calF}{\mathcal{F}}
\newcommand{\calH}{\mathcal{H}}
\newcommand{\calM}{\mathcal{M}}
\newcommand{\E}{\mathbb{E}}
\newcommand{\Expl}{\operatorname{Expl}}
\title{Percolation on random 2-lifts}
\author{Paul Drouvillé\footnote{paul.drouville@ens-paris-saclay.fr}}
\date{}
\begin{document}
\maketitle
\begin{abstract}
Given a graph $G$, we consider a model for a random cover of $G$ by taking two parallel copies of $G$ and crossing every pair of parallel edges randomly with probability $q$ independently of each other. The resulting graph $G_q$, is a random $2$-lift of $G$ that may not be transitive but still probabilistically exhibit many properties of transitive graphs. Studying percolation in this context can help us test the reliability and robustness of our proofs methods in percolation theory.
Our three main results on this model are the continuity of the critical parameter $p_c(G_q)$,  for $q\in(0,1)$, the strict monotonicity $p_c(G_q)< p_c(G)$ and the exponential decay of the cluster size in the subcritical regime at $q=1/2$.
\end{abstract}

\section{Introduction}
Bernoulli Percolation was introduced in 1957 by Broadbent and Hammersely \cite{BH57} to model the propagation of a fluid through porous materials. While originally focused on Euclidean lattices $\Z^d$, the model was also studied on different classes of graphs. In 1996, Benjamini and Schramm \cite{BS96} initiated a more systematic study of percolation on transitive and quasi-transitive graphs and made connections with geometric group theory. In their article, they study the link between percolation on a graph and percolation on one of its covers. 
If $G$ is a graph and $x$ one its vertex, we write $G_1(x)$ for the subgraph, induces by $x$, its neighbors and the edges that join $x$ to its neighbors. We will say that a graph $H$ is a cover of $G$ if there is a surjective graph homomorphism $\pi$ from $H$ to $G$ such that for any vertex $x$ of $H$ the restriction $\pi:H_1(x)\to G_1(\pi(x))$ is an isomorphism. 
\emph{Throughout this paper, our graphs will always be non-empty simple undirected graphs, connected and locally finite}. We will mostly focus (though not entirely) our study on transitive graphs, that is, graphs such that there is a group that acts transitively on the set of vertices through graph automorphism.

We want to examine a specific model for a random cover of a graph $G$.  We construct our cover by first taking two copies of $G$ in parallel, $G_0$ and $G_1$. For every edge $e=\{x,y\}\in E(G)$, there are two corresponding edges $e_0=\{x_0,y_0\}$ and $e_1=\{x_1,y_1\}$ in $E(G_0)$ and $E(G_1)$. We replace these two edges with $\{x_0,y_1\}$ and $\{x_1,y_0\}$ with probability $q$ and we do nothing with probability $1-q$. See Figure \ref{fig:SwitchDEF} for an illustration. This process is done independently for all edges $e$. We say that the pair is \textbf{switching} if it is in the latter state. 

\begin{figure}[h!]
    \centering
    \includegraphics[scale=0.40]{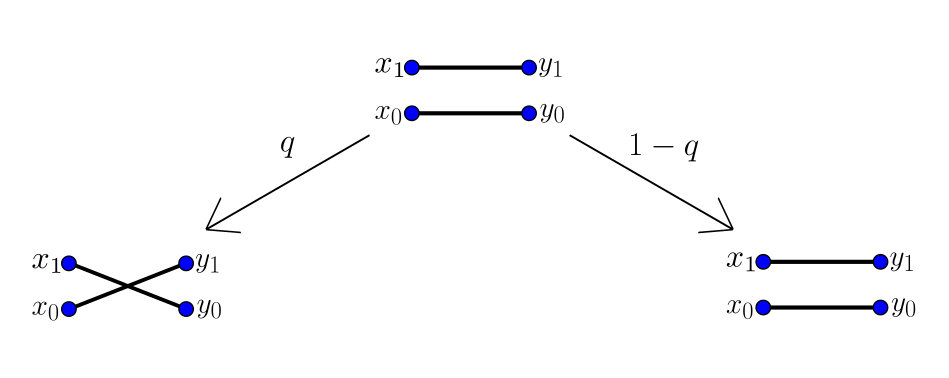}
    \label{fig:SwitchDEF}
    \caption{Pairs of edges can be switched or not, with probability $q$}.
\noindent 
\end{figure}

The resulting random graph is a random 2-lift of the original graph $G$. Constructing a random cover of a graph $G$ has been used in the context of the spectral study of graphs. Recall that a $d$-regular graph $G$ is called a Ramanujan graph if $\lambda_2(G)\leq 2\sqrt{d-1}$ where $\lambda_2(G)$ denotes the second-largest eigenvalue of the Laplacian of $G$. 
Finding the possible values for the number of vertices of a Ramanujan graph, given its degree $d$, is a difficult problem 
For example, one possible solution to create $d$-regular graphs with low second eigenvalue is to start from a $d$-regular graph $G$ that is Ramanujan, and take a random $n$-lift cover of our graph. One can show that the resulting graph is Ramanujan with positive probability (see \cite{MSS14} and \cite{ABG10}).

This article aims to study percolation on this random cover, and we prove in particular three mains theorems:The strict inequality $p_c(G_q)<p_c(G)$ for the critical parameter, the continuity of the function $q\mapsto p_c(q)$ in $(0,1)$ and the exponential decay of the cluster size in the subcritical regime at $q=1/2$.
\begin{theorem}\label{Thm:cont}
    For any transitive infinite graph $G$, the function $q\mapsto p_c(q)$ is continuous on $(0,1)$.
\end{theorem}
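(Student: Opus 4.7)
The plan is to deduce continuity of $p_c$ at each $q_0\in(0,1)$ from a stochastic-domination comparison between the joint laws $\PR_{p,q}$ corresponding to different values of $q$. I view $\PR_{p,q}$ as a measure on percolation configurations of the deterministic \emph{enhanced graph} $\widehat G$ on the vertex set $V(G)\times\{0,1\}$, carrying all four potential edges $\{x_i,y_j\}$ with $i,j\in\{0,1\}$ for each $\{x,y\}\in E(G)$. In this description $\PR_{p,q}$ factors as a product, over the edges of $G$, of an explicit joint law on four coordinates: with probability $1-q$ the pair is non-switching and the two parallel edges $\{x_0,y_0\},\{x_1,y_1\}$ are each open independently with probability $p$ (the two crossing edges being absent), and with probability $q$ the symmetric situation occurs for the two crossing edges. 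Writing $\theta(p,q):=\PR_{p,q}[0_0\leftrightarrow\infty]$, we have $\theta(p,q)>0$ iff $p>p_c(q)$.

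The key step is a per-edge stochastic domination. Fixing $p_0\in(0,1)$ and $q_0\in(0,1)$, I would show the existence of a constant $C=C(p_0,q_0)<\infty$ such that, for every $\epsilon>0$ small enough and every $q$ with $|q-q_0|\leq\epsilon$, the four-bit local measure $\PR^e_{p_0,q}$ is stochastically dominated by $\PR^e_{p_0+C\epsilon,q_0}$. By Strassen's theorem this reduces to finitely many polynomial inequalities, one per up-closed subset of $\{0,1\}^4$; each is an equality at $(\epsilon,C)=(0,0)$ and is satisfied to first order in $\epsilon$ as soon as $C$ is chosen larger than quantities such as $p_0/q_0$, $p_0/(1-q_0)$ and $p_0(2-p_0)/[2(1-q_0)(1-p_0)]$, modulo lower-order corrections. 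By independence across edges of $G$ this lifts to the global domination $\PR_{p_0,q}\preceq\PR_{p_0+C\epsilon,q_0}$ on $\{0,1\}^{E(\widehat G)}$, and applied to the increasing event $\{0_0\leftrightarrow\infty\}$ it yields $\theta(p_0,q)\leq\theta(p_0+C\epsilon,q_0)$.

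Continuity of $p_c$ then follows quickly. For the lower semi-continuity, given $p_0<p_c(q_0)$ I pick $\epsilon$ so that $p_0+C\epsilon<p_c(q_0)$; then $\theta(p_0,q)=0$ for $|q-q_0|\leq\epsilon$ and hence $p_c(q)\geq p_0$ on this neighbourhood, so letting $p_0\nearrow p_c(q_0)$ yields $\liminf_{q\to q_0}p_c(q)\geq p_c(q_0)$. The upper semi-continuity is obtained symmetrically by exchanging the roles of $q$ and $q_0$ in the per-edge domination. The main obstacle in this scheme is the per-edge check itself: although finite, it cannot be short-circuited via Holley's lattice condition, which fails here because the two switching sectors have disjoint supports (for instance $\omega=(1,0,0,0)$ and $\omega'=(0,0,1,0)$ satisfy $\PR^e_{p,q}[\omega\vee\omega']=0$ while $\PR^e_{p,q}[\omega]\,\PR^e_{p,q}[\omega']>0$). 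Every up-closed event must therefore be handled individually, with extra care for those mixing both switching sectors, which precisely encode the correlation imposed by the switch. The constant $C(p_0,q_0)$ obtained this way stays finite as long as $p_0,q_0,1-q_0,1-p_0$ are bounded away from $0$, which is exactly what is needed once $q_0\in(0,1)$ and $p_c(q_0)\in(0,1)$; the boundary cases $p_c(q_0)\in\{0,1\}$ make the corresponding semi-continuity trivial.
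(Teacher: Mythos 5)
Your proposal is correct in spirit and takes a genuinely different route from the paper, so a comparison is in order.

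The paper proves continuity via Lemma~\ref{inegalitéstylée}, which yields $1/2$-Hölder continuity. It builds an explicit three-way coupling: a common family of uniform random variables $(\eta_e)$ is used both to produce the switching configuration $\overline\eta$ of $G_a$ and, together with auxiliary variables $(X_e,Y_e,Z_e)$, to produce an independent pair $(\omega^\pm_e,\hat\eta_e)$ realising percolation with parameter $1-\sqrt r$ on $G_{q/(1-r)}$. The key structural fact is that the two graphs can only disagree on the switching state of pairs that are closed in the ``small'' graph, so openness transfers. The price of forcing $\omega^+,\omega^-$ to be \emph{independent} Bernoulli variables is the parameter $1-\sqrt r$, which is exactly where the square root (and hence the Hölder exponent) comes from.

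You instead view $\PR_{p,q}$ as a product measure on the fixed lattice $\{0,1\}^{E(\widehat G)}$ of the tetravalent enhanced graph and aim for a per-edge stochastic domination $\PR^e_{p_0,q}\preceq\PR^e_{p_0+C\epsilon,q_0}$, lifted across the product by independence. You correctly diagnose that Holley's criterion is unavailable (the two switching sectors have disjoint supports, so $\mu(\omega\vee\omega')=0$ while $\mu(\omega)\mu(\omega')>0$), and fall back on checking all up-closed subsets of $\{0,1\}^4$ directly. The support is a poset consisting of two four-element ``diamonds'' glued at $(0,0,0,0)$, so every proper up-closed set decomposes as $U_1\cup U_2$ with $U_i$ up-closed in each diamond; writing $\alpha_p(U)$ for the common weight function, the constraint becomes $(1-q)\alpha_p(U_1)+q\alpha_p(U_2)\leq(1-q_0)\alpha_{p'}(U_1)+q_0\alpha_{p'}(U_2)$. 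Since $\partial_p\alpha_p(U)>0$ for every nonempty $U$ and $p\in(0,1)$, and $|\partial_q|$ of the left side is bounded by $p(2-p)$, a finite $C=C(p_0,q_0)$ does the job, uniformly as long as $p_0,q_0$ stay in a compact of $(0,1)^2$. This is exactly the finite case-check you announce but do not carry out; it works, but a complete proof must list the (roughly sixteen) constraints or argue once and for all from the positivity of $\partial_p\alpha_p$.

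The two approaches buy different things. Your per-edge domination is conceptually lighter (no auxiliary randomness beyond the product structure) and, because the perturbation is linear in $\epsilon$, actually yields \emph{local Lipschitz} continuity of $p_c$ on $(0,1)$, which is stronger than the paper's $1/2$-Hölder bound. The paper's coupling, on the other hand, is fully explicit and avoids any appeal to Strassen or to an enumeration of up-closed events; its constant is uniform on any compact of $(0,1)$ with no hidden case analysis. One should also note a small caveat on your side: the domination you write down works for a fixed $p_0\in(0,1)$, so the semi-continuity arguments should be stated as ``for every $p_0<p_c(q_0)$ (resp.\ $p_0>p_c(q_0)$) and $\epsilon$ small depending on $p_0$''; you do phrase it this way, and your remark about the degenerate case $p_c(q_0)=1$ is handled by taking $p_0\nearrow1$, but this degeneracy is worth keeping visible since the constant $C(p_0,q_0)$ blows up as $p_0\to1$.
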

In a forthcoming parallel work, Paul Rax independently proves the continuity of $p_c$ at $0$ and $1$, so $q\mapsto p_c(q)$ is fully continuous on $[0,1]$. 

\begin{theorem}\label{Thm:Mono}
    Let $G$ be a connected, transitive infinite graph that is not a tree, and let $q\in(0,1)$. If $p_c(G)<1$ then, we have $$p_c(G_q)<p_c(G),\quad \text{a.s.}$$
\end{theorem}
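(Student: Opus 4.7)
The plan is to prove the strict inequality via an Aizenman--Grimmett style essential enhancement argument, with the non-tree hypothesis on $G$ supplying the cycles that make the enhancement essential.

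I would introduce the auxiliary random graph $G_q^+$ on vertex set $V(G)\times\{0,1\}$, defined by always keeping both parallel lifts $e_0,e_1$ for every $e\in E(G)$ and, independently with probability $q$, additionally inserting the two crossings $e_0',e_1'$. Then $G_q^+$ is a stationary random enhancement of the product cover $G\sqcup G$, whose critical parameter equals $p_c(G)$. Since $G$ contains a finite cycle, the addition of crossings is essential in the Aizenman--Grimmett sense: around such a cycle, adding a single crossing creates, in suitable Bernoulli configurations, a new open connection between the two fibers not previously present (traversing the cycle through one crossing swaps sheets). The Aizenman--Grimmett theorem therefore yields $p_c(G_q^+)<p_c(G\sqcup G)=p_c(G)$ almost surely.

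The principal obstacle is to transfer this strict bound from $G_q^+$ to $G_q$, since $G_q$ is obtained from $G_q^+$ by \emph{deleting} the parallel edges at every switched edge, so $G_q\subset G_q^+$ gives the wrong direction of stochastic dominance. I would handle this by a local finite-energy modification. At a switched edge, the deleted parallel pair $\{e_0,e_1\}$ and the retained crossing pair $\{e_0',e_1'\}$ are symmetric perfect matchings on the same four vertices $\{x_0,x_1,y_0,y_1\}$, so the loss of connectivity caused by the deletion can be compensated, in a Bernoulli$(p)$ configuration, by a small increase of the parameter. Combined with a Russo-formula analysis of pivotal edges to make this quantitative, the strict gap $p_c(G)-p_c(G_q^+)$ supplied by Aizenman--Grimmett should dominate the cost $p_c(G_q)-p_c(G_q^+)$ of the deletion, yielding $p_c(G_q)<p_c(G)$. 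Transitivity of $G$ together with the shift-invariance of the switching law provides the uniform estimates required to carry out this modification in a translation-invariant way; the positive density $q(1-q)$ of switched-versus-unswitched edges ensures that the modification can be performed at a positive density of sites, which is what makes the quantitative comparison go through.
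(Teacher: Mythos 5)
Your proposal has a genuine gap, and it is precisely the step you flag as ``the principal obstacle''. Passing from $G_q^+$ to $G_q$ requires showing that the cost of deleting the parallel pair $\{e_0,e_1\}$ at each switched edge is smaller than the gain from adding the crossings. But the inequality you want to establish,
\[
p_c(G_q)-p_c(G_q^+) \;<\; p_c(G)-p_c(G_q^+),
\]
rearranges to exactly $p_c(G_q)<p_c(G)$, which is the theorem. So the ``Russo-formula analysis'' and ``local finite-energy modification'' are not merely technical work to be filled in: they would have to deliver the entire content of the result, and there is no a priori reason either side dominates. Both the enhancement and the deletion occur at the same density $q$ of edges, and locally the effect of deletion is large --- a switched pair in $G_q^+$ gives a $K_{2,2}$ on $\{x_0,x_1,y_0,y_1\}$, whereas $G_q$ retains only a perfect matching, so the within-$G_q$ constraint that $x_0\leftrightarrow y_1$ open forces $x_0\leftrightarrow y_0$ absent introduces exactly the non-monotone negative information the paper warns about in the introduction. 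There is also a secondary issue: Aizenman--Grimmett-type enhancement theorems are formulated for a fixed base graph with an enhancement rule applied as a Boolean function of the configuration; invoking them for a \emph{random} enhancement (sites chosen i.i.d.\ with probability $q$) and for a graph modification (new edges) rather than a configuration enhancement would itself need an argument.

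The paper sidesteps this entirely: it never compares $G_q$ to the enriched cover $G_q^+$ or to $G\sqcup G$. Instead it projects percolation on $G_q$ down to an \emph{augmented percolation on the base graph $G$}, showing $p_c(G_q)\leq p_c(G,s)$ by an explicit exploration/coupling (Proposition 4.4), and then invokes the Martineau--Severo enhancement result $p_c(G,s)<p_c(G)$ directly on $G$ (Proposition 4.5). The role of the non-tree hypothesis is the same as in your sketch --- cycles supply the essential enhancement --- but the mechanism is different: a switched cycle in $G_q$ lets a vertex reach its twin in boundedly many steps, and this serves, after projection to $G$, as the ``bonus'' that fills a fully open $r$-ball and jumps to its $(r{+}1)$-sphere. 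Because the bound on the distance to the twin is only probabilistic (unlike in [MS19] where the cover has uniformly bounded fibers), the paper also needs Lemmas 4.6--4.8 to distribute this probabilistic connectivity over the cells of a partition. If you want to salvage your idea, you would need to replace the $G_q^+$ detour by a direct comparison between $G_q$ and an enhanced model where no deletion step is ever required.
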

\begin{remark}
    If $G$ is a tree, then $G_q$ consists essentially of two copies of $G$ in parallel so that $p_c(G)=p_c(G_q)$. Therefore, the hypothesis on the presence of a cycle is necessary.
\end{remark}
\begin{theorem}\label{Thm:Sharp}
    Let $G$ be a transitive infinite graph and $p<p_c(1/2)$. There exist $c,C>0$ such that in $G_{1/2}$, $$\Pro_{p,q}\big[\,|\calC_o|\geq n\big]\leq Ce^{-cn}.$$ 
\end{theorem}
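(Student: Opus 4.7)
The plan is to adapt the Duminil-Copin--Raoufi--Tassion sharpness proof via the OSSS decision-tree inequality. The key observation at $q=1/2$ is that the joint distribution $\PR_{p,1/2}$ of $G_{1/2}$ together with its Bernoulli$(p)$ configuration factorizes as an independent product over $E(G)$: for each edge $e$ of $G$ one has a single independent variable $X_e=(\sigma_e,\eta_e^0,\eta_e^1)\in\{N,S\}\times\{0,1\}^2$, where $\sigma_e$ is uniform on $\{N,S\}$ (this is where $q=1/2$ enters) and $\eta_e^0,\eta_e^1$ are independent Bernoulli$(p)$ openings of the two resulting lift edges. The induced configuration on $V(G)\times\{0,1\}$ is moreover invariant in law under $\Aut(G)\times\Z/2\Z$, where $\Z/2\Z$ acts by the deck transformation $x_i\mapsto x_{1-i}$; this vertex-transitive product-measure structure is what the DCRT argument requires.

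Let $\pi:V(G)\times\{0,1\}\to V(G)$ denote the projection, and set $\Lambda_n:=\pi^{-1}(B_G(o,n))$ (a deterministic subset), $A_n:=\{o_0\leftrightarrow\partial\Lambda_n\}$, $\theta_n(p):=\PR_{p,1/2}[A_n]$, and $S_n(p):=\sum_{k=0}^n\theta_k(p)$. I would apply OSSS to $\mathbb{1}_{A_n}$ with the DCRT exploration algorithm: sample $K$ uniformly in $\{0,\dots,n\}$, reveal $X_e$ whenever the cluster exploration of $o_0$ inside $\Lambda_K$ reaches an endpoint of $e$, and reveal the remaining $X_e$ afterwards to determine $A_n$. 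By the $\Aut(G)\times\Z/2\Z$ invariance, one obtains the usual revealment bound $\sum_e \operatorname{Rev}_e(\mathcal{A})\le \frac{C}{n+1}S_n(p)$ via two-point function estimates. For the influence side, Russo's formula for the product measure $\bigotimes_e\mu_{p,e}$ gives $\theta_n'(p)=\sum_{e,i}\PR[\eta_e^i\text{ is pivotal for }A_n]$, since only the Bernoulli coordinates depend on $p$.

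The influence $\operatorname{Inf}_e(\mathbb{1}_{A_n})$ of the composite variable $X_e$ is then dominated by a sum of pivotalities of the $\eta_e^i$ together with a ``$\sigma_e$-pivotality'' term, the latter absorbed into a constant multiple of $\theta_n'(p)$ via a BK-type bound: resampling $\sigma_e$ can alter $A_n$ only when the $4$-cycle on $\{x_0,x_1,y_0,y_1\}$ at $e$ is connected to both $o_0$ and $\partial\Lambda_n$ through disjoint open paths. Combining with the revealment bound yields the differential inequality
$$\theta_n(p)\big(1-\theta_n(p)\big)\le C\frac{S_n(p)}{n}\theta_n'(p),$$
whose standard Menshikov-type analysis gives the dichotomy: either $\lim_n\theta_n(p)>0$ (contradicting $p<p_c(1/2)$), or $\theta_n(p)$ decays exponentially. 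In the subcritical regime one thus obtains $\chi(p):=\sum_n\theta_n(p)<\infty$, from which the exponential decay $\PR_{p,1/2}[|\calC_o|\ge n]\le Ce^{-cn}$ follows by a standard Aizenman--Newman-type argument.

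The main obstacle is the influence-to-derivative bound: the ``$\sigma_e$-pivotality'' contribution to $\operatorname{Inf}_e$, which has no analogue in ordinary Bernoulli percolation, must be absorbed cleanly into $C\theta_n'(p)$. Controlling it requires a BK-type argument on the doubled graph tailored to our non-Bernoulli product measure, and this is precisely where the uniform law of $\sigma_e$ (i.e. $q=1/2$) and the exact deck symmetry are used crucially.
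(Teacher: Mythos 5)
Your proposal attempts the Duminil-Copin--Raoufi--Tassion (OSSS) route, whereas the paper follows Vaneuville's coupling approach with a ghost field, a ``remaining graph'' construction, and a tailored exploration of random graphs (Lemmas~\ref{lem:gen}, \ref{lem:coupl}, \ref{lem:coupl_perco} and the crucial Lemma~\ref{lem:trou_sto}). So this is a genuinely different strategy, not a rephrasing. The product factorization you write down with $X_e=(\sigma_e,\eta_e^0,\eta_e^1)$ and the observation that the law is $\Aut(G)\times\Z/2\Z$-invariant are both correct and are precisely what makes an OSSS attempt plausible at $q=1/2$.

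However, there is a genuine gap exactly at the step you flag as ``the main obstacle'', and it is not merely a technicality to be absorbed. The issue is that the event $A_n=\{o_0\leftrightarrow\partial\Lambda_n\}$ is increasing in the $\eta$-coordinates for fixed $\sigma$, but it is \emph{not} monotone in $\sigma_e$, and $\sigma_e$ can be pivotal in configurations where neither $\eta_e^0$ nor $\eta_e^1$ is pivotal. Concretely, suppose outside $e$ one has disjoint open connections $o_0\leftrightarrow x_0$, $y_0\leftrightarrow\partial\Lambda_n$, $o_0\leftrightarrow y_1$, $x_1\leftrightarrow\partial\Lambda_n$, with $\eta_e^0=\eta_e^1=1$. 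With $\sigma_e=N$ both lift-edges $\{x_0,y_0\}$ and $\{x_1,y_1\}$ complete a crossing, so neither $\eta_e^i$ is pivotal, yet after flipping to $\sigma_e=S$ one only connects $o_0$ to $y_1$ and $x_1$ to $\partial\Lambda_n$ into two separate clusters, and $A_n$ fails. So the pathwise implication ``$\sigma_e$ pivotal $\Rightarrow$ some $\eta_e^i$ pivotal'' is false, and the bound $\operatorname{Inf}_{\sigma_e}\le C\big(\operatorname{Inf}_{\eta_e^0}+\operatorname{Inf}_{\eta_e^1}\big)$ that you would need is a probability inequality requiring a serious argument. Your appeal to ``a BK-type bound'' is exactly where the work hides: BK applies to increasing events, the $\sigma_e$-pivotality condition involves negative information (non-existence of alternative routes), and in a random environment it is not clear that any Reimer-type substitute yields an upper bound by a constant times $\theta_n'(p)$ uniformly in $n$. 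This is essentially the non-monotonicity the paper flags in the introduction as what makes sharpness ``harder and more interesting'' for this model, and it is why the authors build the entire ``remaining graph'' / gauge-transformation machinery of Lemma~\ref{lem:trou_sto} rather than adapting OSSS.

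To salvage your approach you would need, as a standalone lemma, an inequality of the form $\sum_e\PR_{p,1/2}[\sigma_e\text{ pivotal for }A_n]\le C\,\theta_n'(p)$ with $C$ uniform in $n$ and $p$ ranging over a compact subinterval of $(0,p_c)$, with a full proof. As written, the proposal treats this as routine, whereas it is exactly the content-bearing step; absent such a lemma the differential inequality is not established and the argument does not close.
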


Theorem $\ref{Thm:cont}$ can be put in perspective to the problem of locality of percolation.
Let $(G,o)$ and $(H,o')$ be two rooted graphs. the metric $d$ defined by $$d((H,o'),(G,o))=2^{-\sup\{n|B_n(G,o)\cong B_n(H,o')\}}$$ makes the space of isomorphism classes of rooted graphs a totally disconnected metric space. 
Schramm's Locality Conjecture stated that, restricting our view to transitive graphs where $p_c(G)<1-\varepsilon$ (to avoid cases such as $\Z\times (\Z/N\Z)^d \underset{N\xrightarrow\,\infty}{\longrightarrow} \Z^{d+1}$), then $p_c$ is continuous. This question was solved by Easo and Hutchcroft in \cite{Locality} but counterexamples can be found if the question is extended to random unimodular graph \cite{LocalModul}. In our model, continuously moving the parameter $q$ means continuously changing the law of the graph $G_q$. Therefore, Theorem~\ref{Thm:cont} can be seen as an example of locality in a specific set of random graphs.
The proof in Section \ref{Sec:cont} uses a coupling between percolation and the switching states of edges.

In their seminal article \cite{BS96}, Benjamini and Schramm proved that the critical parameter for a cover graph $H$ is always smaller than the critical parameter for the base graph $G$. They also conjectured that under reasonable hypotheses, the inequality is strict. This question was essentially answered in \cite{MS19} where the authors used augmented percolation to prove the strict monotonicity of critical parameters.
Theorem~\ref{Thm:Mono} is the corresponding result in the context of our random cover. The proof in Section \ref{Sec:Mono} is an adaptation of the arguments of \cite{MS19}. In our model, the hypothesis of ''bounded fibers'' (there exists $D>0$ such that two lifts can be linked in less than $D$ steps) is false. We therefore must accommodate the proof when the former hypothesis is only true ``probabilistically''. 

Theorem $\ref{Thm:Sharp}$ is the famous exponential decay property, which was proved independently by Aizeman and Barsky in \cite{AizeBarsky} in $\Z^d$ and by Menshikov in \cite{Men86} for transitive graphs with slightly-stretched exponential growth. It was then extended to all quasi-transitive graphs in \cite{Antunovi__2007}. More recent proofs can be found in \cite{DCT15,Van23,DRT17}. Just as for locality, counterexamples can be found for unimodular random graphs \cite{LocalModul}. We prove the exponential decay in Section \ref{Sec:Sharp} using the coupling techniques found in \cite{Van23}. 
We need to adapt the definition of an exploration to discover the percolation of edges as well as the structure of the graph itself. We show that natural properties of an exploration for transitive deterministic graphs can be also be true for random covers. The challenge in exploring random graphs is finding what information we need to reveal or not at each step. For example, we may sometimes need to explore an edge without knowing one of its endpoints. We make use of gauge transformations within our random cover that reveal hidden symmetries of our constructions that exist at $q=1/2$. 

We point out that the model we study is not ``monotone'' in the following sense : If $e=\{x,y\}$ is an edge of $G$, then $e'=\{x_0,y_0\}$ and $e''=\{x_0,y_1\}$ are two lifts of $e$  that can be present in $G_q$. However, if $e'$ is open in $G_q$, it is present in $G_q$ and therefore $e''$ is not present and in particular is closed. This means, that any ``positive'' information acquired about percolation comes with a ``negative'' information associated. This property makes studying sharpness on the model harder and more interesting. 

If $x$ is a vertex in a graph $G$, then $B_R(x)$ (resp. $S_R(x)$) will denote the set of vertices at distance at most (resp. exactly) $R$ from $x$. The set $S_{R+1/2}(x)$ will denote the set of edges with one endpoint in $S_{R}(x)$ and the other in $S_{R+1}(x)$. All these notations will be extended to sets of vertices $A$ by taking the union for all the elements in $A$.

To give a formal definition of our random 2-lift,  we consider a graph $G=(V,E)$ and define the modified vertex set $V^\star=V\times\{0,1\}$. For every vertex $v$ in $V$, we write $v_0$ and $v_1$ for the two corresponding lifts in $V^\star$. If $v_{\varepsilon}$ is a lift of $v$, then we call $v_{1-\varepsilon}=T(v_{\varepsilon})$, the ``twin vertex'' of $v_{\varepsilon}$. 
With each edge $e=\{u,v\}$ in, $E$ we associate a Bernoulli variable $\eta_e$ with parameter $q$, the random variables being taken independent of one another. This allows us to define a random edge set: 
$$E(\eta)=\Big\{\{u_0,v_{\eta_e}\}\cup \{u_1,v_{1-\eta_e}\} \,\Big|\,e=\{u,v\}\in E\Big\}.$$
These two set defines our random 2-lift $G_q =(V^\star,E(\eta))$. Let $\pi: G_q\to G$ be the associated covering map. The action of $\pi$ is essentially to forget the distinction between the floors $0$ and $1$. 

We then add another layer of randomness through Bernoulli percolation. Let $(\omega_{e'})_{e'\in E(\eta)}$ be an independent family of Bernoulli variable with parameter $p$. We define $G_{p,q}$ as the random graph of open edges for the Bernoulli percolation on $G_q$. Note that the measures on $\omega$ are conditional on the state of $\eta$, as we need to know which edges exist in our graph to talk about open or closed. Another solution would be to consider edges as an abstract set that may randomly link vertices or not. These considerations do not matter until Section \ref{Sec:Sharp},so we will leave this discussion for now. 
In this paper, variables ``$\eta$'' describe the switching state of pairs of edges, while variables ``$\omega$'' describe a Bernoulli percolation. 

Let $G$ be a transitive graph. Despite the randomness of the graph $G_q$, Bernoulli percolation still exhibits a deterministic phase transition. Let $\Psi(p,q)$ be the probability that there is an infinite cluster in $G_{p,q}$. As in standard Bernoulli percolation for deterministic graphs, Kolmogorov 0-1 law holds and tells us that $\Psi(p,q)\in \{0,1\}$ because the event does not depend on any finite boxes in the graph. The standard coupling also shows that for every $q$ in $[0,1]$, $p\mapsto \Psi(p,q)$ is weakly increasing. 
We can therefore conclude that for every $q$ there is a unique critical parameter $p_c(q)\in[0,1]$ such that:$$\left\{
\begin{array}{ll}
  \Psi(p,q)=1 & \mbox{for }p>p_c(q), \\
  \Psi(p,q)=0 & \mbox{for }p<p_c(q).
\end{array}
\right.$$
Since deg$(G_q)=$ deg$(G)< +\infty$, we have $p_c(q)>0$.

Finally, for a subset $S$ of $V$, we call $\phi_S$, the gauge transformations, which for every vertex $s\in S$, switches $s_0$ with $s_1$, that is $$\phi_S(v_{\varepsilon})=\left\{
\begin{array}{ll}
T(v_{\varepsilon})\text{ if }v\in S,\\
v_{\varepsilon}\text{ if }v\notin S.
\end{array}
\right.$$
We also consider the extension of this mapping to edges also noted $\phi_S$ defined by $\phi_S(\{x,y\})=\{\phi_S(x),\phi_S(y)\}$. The structure of $G_q$ makes $\phi_S$ a graph isomorphism.

\section{Basic properties}
In this section, we give simple properties of the function $q\mapsto p_c(q)$. The first one gives a crude bounds for its range.

\begin{proposition}
 For every $(p,q)$ in $(0,1)^2$, $\theta\left(p^2\right)\leq\theta(p,q)\leq \theta\left(1-(1-p)^2\right)$.
\end{proposition}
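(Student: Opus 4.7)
The plan is a double-coupling argument: I compare Bernoulli percolation on the random cover $G_q$ with two standard Bernoulli percolations on the base graph $G$, one at parameter $p^2$ (for the lower bound) and one at parameter $1-(1-p)^2$ (for the upper bound).

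First I would set up the coupling explicitly. For each edge $e=\{x,y\}\in E$, its two lifts $e',e''\in E(\eta)$ carry independent Bernoulli$(p)$ weights, so the variables $\omega^-_e:=\omega_{e'}\wedge\omega_{e''}$ and $\omega^+_e:=\omega_{e'}\vee\omega_{e''}$ are independent Bernoulli of parameters $p^2$ and $1-(1-p)^2$ on $E(G)$, respectively. The key structural observation, which is really the heart of the proof, is that regardless of $\eta_e\in\{0,1\}$, the two lifts of $e$ always realise one of the two perfect matchings between $\{x_0,x_1\}$ and $\{y_0,y_1\}$, namely $\{x_0,y_0\},\{x_1,y_1\}$ if $\eta_e=0$ and $\{x_0,y_1\},\{x_1,y_0\}$ if $\eta_e=1$.

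From this matching fact the two bounds follow by lifting or projecting paths. For the lower bound, if $\omega^-_e=1$ then every lift of $x$ is $\omega$-adjacent in $G_q$ to exactly one lift of $y$; consequently any $\omega^-$-open path from $o$ in $G$ can be lifted, starting at $o_0$ and making a unique deterministic choice at each step, to an $\omega$-open path in $G_{p,q}$ visiting some lift of each successive vertex. Pointwise in $\eta$ this gives $\pi(\calC_{o_0}^{G_{p,q}})\supseteq\calC_o^{G,\omega^-}$, and averaging over $\eta$ yields $\theta(p^2)\leq\theta(p,q)$. For the upper bound, every $\omega$-open edge upstairs is a lift of an $\omega^+$-open edge downstairs, so projecting an $\omega$-open path by $\pi$ gives $\pi(\calC_{o_0}^{G_{p,q}})\subseteq\calC_o^{G,\omega^+}$; since $\pi$ is at most $2$-to-$1$, an infinite cluster upstairs forces an infinite one downstairs, giving $\theta(p,q)\leq\theta(1-(1-p)^2)$.

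There is no real obstacle here: once the matching structure is noted, both bounds are routine coupling arguments, and since the comparisons hold pointwise in $\eta$ no separate averaging step over the graph randomness is required beyond taking probabilities at the end.
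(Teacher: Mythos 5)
Your proof is correct and takes essentially the same approach as the paper: both couple percolation on $G_q$ with percolations on $G$ at parameters $p^2$ and $1-(1-p)^2$ via the min and max of the two lift variables, then lift paths upward and project paths downward. Your version is somewhat more explicit than the paper's — the paper states the lifting/projecting step in one sentence, whereas you spell out the matching structure that makes the lifting work and the $2$-to-$1$ property used for the projection — but it is not a different route.
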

\begin{proof}
    With every realization of $G_{p,q}$, we can associate two percolation configurations on $G$ by looking at the image of $G_{p,q}$ by $\pi$. Recall that $\vee$ stands for the maximum operator and $\wedge$ for the minimum operator. Formally, given a family $(\omega^+_{e},\omega^-_{e})_{e\in E}$ of independent Bernoulli variable with parameter $p$ that create a percolation on $G_q$, we define $\omega^{\wedge}_e=\omega^+_{e}\wedge\omega^-_{e}$ and $\omega^{\vee}_e=\omega^+_{e}\vee\omega^-_{e}$. The families $(\omega^{\wedge}_e)_{e\in E}$ and $(\omega^{\vee}_e)_{e\in E}$ are families of independent Bernoulli variables with parameter $p^2$ and $1-(1-p)^2$. The existence of an infinite cluster for $\omega^{\wedge}$ assures the existence of an infinite cluster for $\omega$ which itself assures the existence of an infinite cluster for $\omega^{\vee}$. This means we get the inequality $$\theta(p^2)\leq\theta(p,q)\leq\theta(1-(1-p)^2).$$
\end{proof}
\begin{remark}
The proposition gives us an inequality for $p_c$: $$1-\sqrt{1-p_c(G)}\leq p_c(q)\leq \sqrt{p_c(G)}.$$
\end{remark}
\vspace{3mm}
Recall that a graph is bipartite if there is a partition $V_1,V_2$ of the vertex set such that every edge can be written as $\{v_1,v_2\}$ with $v_1\in V_1$ and $v_2\in V_2$. 
\begin{proposition}\label{Prop:biparti}
    If $G=(V,E)$ is a bipartite graph, then $$\forall \, q\in\,[0,1],\quad p_c(q)=p_c(1-q). $$
\end{proposition}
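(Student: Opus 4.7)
The plan is to use the gauge transformation $\phi_{V_1}$ associated to one side $V_1$ of the bipartition, and show that it maps a realization of $G_q$ with switching variables $(\eta_e)_{e\in E}$ to the realization of the random $2$-lift built from the complementary switching variables $(1-\eta_e)_{e\in E}$. This identifies the laws of $G_q$ and $G_{1-q}$ up to a deterministic graph isomorphism, which immediately forces $p_c(q)=p_c(1-q)$.

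More concretely, fix a bipartition $V=V_1\sqcup V_2$ and take any edge $e=\{u,v\}$ with $u\in V_1$ and $v\in V_2$. The key computation is that $\phi_{V_1}$ swaps $u_0\leftrightarrow u_1$ while fixing $v_0$ and $v_1$. If $\eta_e=0$ then $e$ contributes the edges $\{u_0,v_0\}$ and $\{u_1,v_1\}$ to $E(\eta)$, and their images under $\phi_{V_1}$ are exactly $\{u_1,v_0\}$ and $\{u_0,v_1\}$, i.e.\ the edges $e$ would have contributed if $\eta_e$ had been $1$. The same check in the reverse direction shows that $\phi_{V_1}$ realizes a bijection between edge sets $E(\eta)$ and $E(1-\eta)$. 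Since this works simultaneously for every edge (because bipartiteness guarantees that every edge has exactly one endpoint in $V_1$), the map $\phi_{V_1}$ is a deterministic graph isomorphism from $G_q(\eta)$ onto $G_q(1-\eta)$.

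Now by independence and the fact that $1-\eta_e\sim\mathrm{Bernoulli}(1-q)$ whenever $\eta_e\sim\mathrm{Bernoulli}(q)$, the family $(1-\eta_e)_{e\in E}$ has exactly the distribution defining $G_{1-q}$. Consequently $\phi_{V_1}$ pushes the law of $G_q$ forward to the law of $G_{1-q}$, so these two random graphs are equal in distribution. Composing with an independent Bernoulli$(p)$ percolation (which is invariant under relabeling of edges), we obtain that $G_{p,q}$ and $G_{p,1-q}$ have the same law. In particular their percolation probabilities $\Psi(p,q)$ and $\Psi(p,1-q)$ agree for every $p$, and therefore $p_c(q)=p_c(1-q)$.

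There is no substantive obstacle here: the content of the proposition is purely combinatorial, and the only thing to check carefully is that bipartiteness is used precisely to ensure each edge of $G$ is ``flipped'' by $\phi_{V_1}$. If $G$ were not bipartite, an edge $e=\{u,v\}$ with both endpoints in $V_1$ (or both in $V_2$) would be fixed as a pair by $\phi_{V_1}$ rather than having its switching state reversed, and the argument would break down.
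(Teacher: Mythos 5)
Your proof is correct and takes essentially the same approach as the paper: both apply the gauge transformation $\phi_{V_1}$ for one side of the bipartition and observe that it reverses the switching state of every edge, giving a coupling between $G_q$ and $G_{1-q}$. You simply spell out the edge-by-edge verification that the paper leaves implicit.
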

\begin{proof}
    Let us call $V_1,V_2$ a partition associated with $V$. We consider the effect of the gauge transformation $\phi_{V_1}$. Since every edge has one and only one endpoint in $V_1$, every edge changes its switching state after the application of the morphism. Therefore, if the edge is switching with probability $q$, then its image is switching with probability $1-q$. The action of $\phi_{V_1}$ gives a coupling between $G_q$ and $G_{1-q}$ and therefore between $G_{p,q}$ and $G_{p,1-q}$ such that $\theta(p,q)=\theta(p,1-q)$. Proposition \ref{Prop:biparti} follows.
\end{proof}

In the case $q=1/2$, there is a simple formula for the probability that $G_q$ is connected for a finite graph $G$. If $C$ is a cycle in $G$, we say it is \textbf{switching} in $G_q$ if there is an odd number of edges $e\in C$ such that $\eta_e=1$.
\begin{proposition}
    For $G=(V,E)$ a finite connected graph, the probability that $G_{1/2}$ is not connected is $2^{|V|-|E|-1}$.
\end{proposition}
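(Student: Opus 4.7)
The plan is to exploit the twin map $T$ and the gauge transformations $\phi_S$ already defined in the paper to reduce the disconnection event of $G_{1/2}$ to a linear condition on the switching vector $\eta$, and then count. Concretely, I would prove the following characterization: $G_q$ is disconnected if and only if the set $\{e\in E:\eta_e=1\}$ of switching edges is a \emph{cocycle} of $G$, i.e.\ coincides with $\partial S:=\{\{u,v\}\in E:|\{u,v\}\cap S|=1\}$ for some $S\subseteq V$.

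The easy direction is ``if'': by the definition of $\phi_S$ on edges, applying $\phi_S$ flips the switching state of exactly those edges with one endpoint in $S$. Hence, if the switching set is $\partial S$, then $\phi_S(G_q)$ has no switching edge at all, so it consists of two disjoint copies of $G$. Since $\phi_S$ is a graph isomorphism, $G_q$ itself is disconnected. For the converse, I note that the twin map $T$ is always a graph automorphism of $G_q$ (both non-switching and switching edge pairs are $T$-invariant as sets). If $\calC$ is a connected component of a disconnected $G_q$, then $T(\calC)$ is also a component; since $G$ is connected, $\pi(\calC)=V$, so every $v\in V$ has at least one lift in $\calC$. If $T(\calC)=\calC$, both lifts of every vertex would lie in $\calC$, forcing $\calC=V^\star$, a contradiction. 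Hence $\calC$ and $T(\calC)$ partition $V^\star$. Setting $S:=\{v\in V:v_0\in\calC\}$, a short case analysis on each edge $e=\{u,v\}\in E$ (considering the two lifts of $e$ in $G_q$ in both the switching and non-switching cases) shows that no edge of $G_q$ crosses between $\calC$ and $T(\calC)$ if and only if $e$ is switching precisely when $|\{u,v\}\cap S|=1$, giving the desired characterization.

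With the characterization in hand, counting is straightforward. Since $G$ is connected, the boundary map $S\mapsto\partial S$ from $\mathbb{F}_2^V$ to $\mathbb{F}_2^E$ has kernel exactly $\{\emptyset,V\}$, so the cocycle space has dimension $|V|-1$ and contains exactly $2^{|V|-1}$ elements. At $q=1/2$ the variables $(\eta_e)_{e\in E}$ are i.i.d.\ uniform on $\{0,1\}$, so each specific $\eta\in\{0,1\}^E$ has probability $2^{-|E|}$. Summing over the $2^{|V|-1}$ cocycles yields
$$\Pro[G_{1/2}\text{ is not connected}] \;=\; 2^{|V|-1}\cdot 2^{-|E|} \;=\; 2^{|V|-|E|-1}.$$
The only genuinely delicate step is the ``only if'' direction, which relies on the $T$-equivariance of $G_q$ and the case analysis above; once the characterization is established, the counting and probability computation are routine linear algebra over $\mathbb{F}_2$.
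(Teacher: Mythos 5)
Your proof is correct and takes essentially the same gauge-transformation/cocycle approach as the paper: the $2^{|V|-1}$ cocycles $\partial S$ are exactly the orbit of the all-non-switching configuration under the gauge group, and the count $2^{|V|}/2$ comes from the same observation that the kernel of $S\mapsto\partial S$ (equivalently of $S\mapsto\phi_S$) is $\{\emptyset,V\}$ for connected $G$. Your write-up is in fact somewhat more careful than the paper's, since you explicitly establish via the twin map $T$ the characterization ``$G_q$ disconnected $\iff$ switching set is a cocycle,'' a step the paper compresses into the phrase ``the factorization theorem.''
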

\begin{proof}
First, note that at $q=1/2$, the probability of switching and not switching are the same. This means that every configuration has the same probability of happening. There are in total $2^{|E|}$ possible configurations, and we need to count how many do not make $G_{1/2}$ connected. In every configuration, we can use the gauge transformations $(\phi_S)_{S \subset V}$, which preserve the connectedness of the configuration. 

The kernel of the action is composed of the subsets $S=\emptyset$ and $S=V$. The factorization theorem tells us that the number of non-connected configurations is $2^{|V|-1}$. We conclude by dividing this last quantity by the total number of configurations, which is $2^{|E|}$. 
\end{proof}

\section{Continuity of $p_c$ through coupling}\label{Sec:cont}
This section aim at proving Theorem~\ref{Thm:Sharp}, which is a consequence of the following theorem. 
\begin{theorem}\label{Holder}
    The function $q\mapsto p_c(q)$ is 1/2-Holderian on every compact interval of $(0,1)$.
\end{theorem}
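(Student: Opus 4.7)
The plan is to construct a joint monotone coupling of the entire family $\{G_{p,q}\}_{(p,q)\in[0,1]^2}$ and to quantify, via a sprinkling argument, the cost in $p$ of a small change in $q$. To each edge $e\in E$ of the base graph I would attach three independent uniforms $U_e,V_e^{(0)},V_e^{(1)}\in[0,1]$, set $\eta_e^q:=\mathbf{1}[U_e\leq q]$, and declare the two lifted edges above $e$ open at level $p$ whenever $V_e^{(0)}\leq p$ and $V_e^{(1)}\leq p$. Under this coupling, fixing $p$ and varying $q$ only changes switching states on the discrepancy set $D_{q,q'}=\{e:U_e\in(\min(q,q'),\max(q,q')]\}$, which is a Bernoulli$(|q-q'|)$ random subset of $E$; similarly, varying $p$ at fixed $q$ only reveals a few more open edges in a controlled way.

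The aim is then, for a compact $I\subset(0,1)$, to produce a constant $C=C(I)$ such that $p+C\sqrt{|q-q'|}>p_c(q')$ whenever $p>p_c(q)$ and $q,q'\in I$ (the Hölder-$1/2$ estimate then follows by symmetry). I would run the following finite-size/sprinkling argument: choose $R$ large enough that the one-arm probability $\Pro_{p,q}[o\leftrightarrow \partial B_R]\geq \tau$ for some fixed $\tau>0$, and transport this event to the law at $(p+\epsilon,q')$ through the coupling. Restricted to the finitely many triples $(U_e,V_e^{(0)},V_e^{(1)})$ attached to edges touching $B_R$, the two laws are product measures whose per-coordinate Hellinger distance is $O(\sqrt{|q-q'|+\epsilon})$; hence the total Hellinger distance is $O(\sqrt{|E(B_R)|(|q-q'|+\epsilon)})$, and $d_{TV}\leq \sqrt{2}\,d_H$ furnishes a total-variation comparison. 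Choosing $\epsilon=C\sqrt{|q-q'|}$ with $C$ large enough (depending on $|E(B_R)|$ and $\tau$) keeps $\Pro_{p+\epsilon,q'}[o\leftrightarrow \partial B_R]$ uniformly bounded below in $|q-q'|$; an exhaustion in $R$ together with a block/renormalisation step then promotes this one-arm positivity to $\theta(p+\epsilon,q')>0$.

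The main obstacle, I anticipate, is twofold. First, Theorem~\ref{Thm:Sharp}, which would supply a crisp finite-size criterion for supercriticality, is only proved later in the paper, so I must work with soft estimates on one-arm events and make the block-style promotion to an infinite cluster self-contained. Second, the model is non-monotone in the sense flagged after Theorem~\ref{Thm:Sharp}: an open lifted edge in $G_q$ sitting above a pair of $D_{q,q'}$ has no counterpart in $G_{q'}$ because the lifted edges themselves have changed. I would handle this by combining the coupling with the gauge transformations $\phi_S$ introduced in Section~1: for a carefully chosen vertex set $S$, the transformation $\phi_S$ aligns the switching pattern of $G_{q'}$ with that of $G_q$ inside $B_R$, reducing the effective discrepancy to an edge-boundary of $S$ and thereby cutting the Hellinger cost from a volume term $|E(B_R)|\cdot|q-q'|$ down to a surface-type term, which is what preserves the Hölder-$1/2$ exponent.
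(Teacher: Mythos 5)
Your approach diverges substantially from the paper's, and it contains two gaps that I think cannot be repaired without essentially abandoning the plan.

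The first and most serious gap is the final ``block/renormalisation step.'' You need to promote ``$\Pro_{p+\epsilon,q'}[o \leftrightarrow \partial B_R]$ is bounded below'' to $\theta(p+\epsilon,q') > 0$. On $\Z^d$ one has Grimmett--Marstrand-type renormalisation, but for a general infinite transitive graph $G$ (let alone a random $2$-lift of one) no such renormalisation scheme exists. The standard substitute is sharpness of the phase transition: if clusters decay exponentially below $p_c$, then a uniformly positive one-arm probability at a fixed large scale forces supercriticality. But in this paper sharpness for $G_q$ is Theorem~\ref{Thm:Sharp}, which is proved only afterwards and only at $q = 1/2$, and the statement you are trying to prove is needed for all $q \in (0,1)$. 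So the promotion step has no available tool, and this is not a technicality you flag and move on from --- without it the argument does not close.

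The second gap is the gauge-transformation fix for non-monotonicity. The transformation $\phi_S$ flips the switching state of exactly the edges in the edge boundary $\partial_E S$, i.e.\ it changes the configuration by a \emph{coboundary}. Your discrepancy set $D_{q,q'}$, being a Bernoulli$(|q-q'|)$ random subset of $E(B_R)$, is not a coboundary in general (generically it will not even be a cycle, let alone a cut), so there is no choice of $S$ that ``aligns the switching pattern inside $B_R$.'' Consequently the Hellinger cost stays a volume term $|E(B_R)| \cdot |q-q'|$, not a surface term, and the Hölder exponent you would extract from the Hellinger/TV computation --- even granting the promotion step --- would degrade once you account for the $R$-dependence of all the constants.

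The paper avoids both problems by never passing through finite-size one-arm events at all. The key device in Lemma~\ref{inegalitéstylée} is to couple $G_{p(1-\sqrt{r}),\,q/(1-r)}$ and $G_{p,a}$ so that edges whose uniform $\eta_e$ falls in the ambiguous window $(q,q+r)$ are simply declared \emph{closed} in the sparser graph. Closed edges carry no switching information, so the two couplings agree on the switching state of every edge that is open in the sparser graph, and an infinite open cluster transports directly. The price is a multiplicative loss of $1-\sqrt{r}$ in the percolation density, which after optimisation gives $p_c(b) \geq (1-\sqrt{r})\,p_c(a)$ and hence the Hölder-$1/2$ bound, with no appeal to sharpness, renormalisation, or one-arm estimates. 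This sidesteps the non-monotonicity issue you correctly identify: rather than trying to gauge it away, one arranges never to open a lifted edge whose identity is uncertain.
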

This theorem is a consequence of the following inequality for the critical parameter.
\begin{lemma}\label{inegalitéstylée}
       Let $b,r$ in $[0,1] $, then for all $a$ in $[(1-r)\,b,(1-r)\,b+r]$, we have $$p_c(b)\geq (1-\sqrt{r})\,p_c(a).$$ 
\end{lemma}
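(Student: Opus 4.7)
The plan is to prove the equivalent inequality $p_c(a)\le p_c(b)/(1-\sqrt r)$ via a coupling between the random graphs $G_a$, $G_b$ together with their percolations. Fix $p_a>p_c(b)/(1-\sqrt r)$ and set $p_b:=(1-\sqrt r)\,p_a$, so that $\theta(p_b,b)>0$. I will exhibit a single coupling of $(\eta^a,\omega^a)$ and $(\eta^b,\omega^b)$ on one probability space under which $\{o\leftrightarrow\infty\text{ in }G_{p_b,b}\}$ forces $\{o\leftrightarrow\infty\text{ in }G_{p_a,a}\}$ with positive probability; this yields $p_c(a)\le p_a$ and the claim.

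With $s:=(a-(1-r)b)/r\in[0,1]$ (treating $r>0$; the case $r=0$ is trivial), for each edge $e\in E(G)$ I draw independent Bernoulli variables $\eta_e^b\sim\mathrm{Bern}(b)$, $\zeta_e\sim\mathrm{Bern}(r)$ and $\tilde\eta_e\sim\mathrm{Bern}(s)$, and put $\eta_e^a:=\eta_e^b$ if $\zeta_e=0$ and $\eta_e^a:=\tilde\eta_e$ otherwise. Then $\eta_e^a\sim\mathrm{Bern}(a)$, and on the i.i.d.\ $\mathrm{Bern}(1-r)$ random set $R^c:=\{e:\zeta_e=0\}$ the 2-lifts $G_a$ and $G_b$ share the same pair of lifts above $e$. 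I then attach to each of the (at most four) potential lifts of $e$ an independent uniform $U\in[0,1]$ and declare the lift open in $G_{p,\cdot}$ iff $U\le p$: this is the natural monotone coupling on $R^c$ and independent percolations on $R$. Finally I split $\omega^a=\omega^{a,\mathrm{main}}\vee\omega^{a,\mathrm{spr}}$, with $\omega^{a,\mathrm{main}}\sim\mathrm{Bern}(p_b)$ coupled to $\omega^b$ and $\omega^{a,\mathrm{spr}}\sim\mathrm{Bern}(\delta)$ an independent sprinkle; the parameter $\delta=p_b\sqrt r/((1-\sqrt r)(1-p_b))=\Theta(\sqrt r)$ is chosen so that $\omega^a\sim\mathrm{Bern}(p_a)$.

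Given an open infinite path $\pi$ from $o$ in $G_{p_b,b}$, the maximal subpaths of $\pi$ projecting entirely into $R^c$ are automatically open in $G_{p_a,a}$ through $\omega^{a,\mathrm{main}}$. Across any $R$-edge of $\pi$ the $G_a$-lift differs from the $G_b$-lift and $\pi$ breaks in $G_a$ at that point; the sprinkle $\omega^{a,\mathrm{spr}}$ is then used to reconnect consecutive $R^c$-pieces by opening a short detour in $G_a$, which exists because $G$ is not a tree and hence carries local cycles giving alternative routes around each broken edge. A renormalisation argument over a growing sequence of balls $B_R(o)$ then combines these local repairs into an infinite cluster in $G_{p_a,a}$ with positive probability.

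The main obstacle is the correlated structure of the $R$-obstacles: deleting one edge of $G$ simultaneously kills both of its lifts in $G_b$, so a naive restriction of the $G_b$-cluster to $R^c$-edges gives a dependent sub-percolation whose threshold cannot be bounded by a straightforward Bernoulli comparison. The sprinkling $\omega^{a,\mathrm{spr}}$ compensates for this loss, and extracting the sharp factor $1-\sqrt r$ requires a second-moment / Chebyshev estimate showing that, along a length-$n$ open path of $G_{p_b,b}$, the number of $R$-obstacles is concentrated on the scale $\sqrt{nr}$, which is precisely the density of independent sprinkle-repairs available in the same region. The conditional independence between the $G_b$-exploration and the sprinkle layer is what makes this balance work and produces the exponent $1/2$ in the statement.
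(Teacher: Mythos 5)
Your coupling of the switching fields (put $\eta^a_e=\eta^b_e$ off a density-$r$ set $R$, resample on $R$) is natural, but the rest of the argument does not go through, and the approach as a whole is much weaker than what the statement needs. Concretely:

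\emph{The ``local detour plus renormalisation'' step is unsubstantiated and almost certainly false at this generality.} You need to reroute an infinite path around every edge of $R$ it touches, with a fixed sprinkle density $\delta=\Theta(\sqrt r)$, uniformly over the random geometry of $G_b$ and $G_a$. A renormalisation argument that glues such repairs into an infinite cluster is a serious piece of machinery; nothing in the sketch controls how the failures of individual detours interact, nor why the available sprinkle suffices. Moreover, the lemma in the paper holds for \emph{arbitrary} graphs, but your detour idea explicitly requires local cycles (you invoke ``$G$ is not a tree''), so the argument could not yield the stated result even in principle.

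\emph{The quantitative heuristic is wrong.} Along a length-$n$ open path the number of $R$-edges concentrates around $nr$, not $\sqrt{nr}$, by the law of large numbers; the exponent $1/2$ in $1-\sqrt r$ does not come from a CLT-type count of obstacles. In the paper's proof the $\sqrt r$ arises for a different reason: one builds percolation variables $\omega^\pm_e\sim\mathrm{Bern}(1-\sqrt r)$ in such a way that they are \emph{forced to be closed precisely on the set where the two switching fields can disagree}. That set has density $r$, and each $\omega^\pm_e$ still needs an extra independent closure with conditional probability about $\sqrt r/(1+\sqrt r)$ off that set, giving total closure probability $r+(1-r)\cdot\sqrt r/(1+\sqrt r)=\sqrt r$. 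With this construction, any edge open in the coupled $(b,1-\sqrt r)$-percolation automatically has the \emph{same} switching state under $a$ and under $b$, so an infinite cluster in $G_{p(1-\sqrt r),\,b}$ is literally contained in $G_{p,a}$ after one more common $p$-sprinkle. No reconnection, detours, or renormalisation are needed, and no cycle hypothesis is used.

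In short, the missing idea is: do not try to repair paths broken on $R$ — instead design the $(1-\sqrt r)$-percolation so that the broken edges are already closed. That removes the entire second half of your argument and is where the exponent $1/2$ really comes from.
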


\begin{proof}[Proof of Theorem~\ref{Holder} given Lemma \ref{inegalitéstylée}]
For $I=[\alpha,\beta]\subset(0,1)$, let $q$ and $q'$ be real numbers in $I$ such that $q'<q$. We can write $q'=(1-r)q$ if we set $r=\frac{q-q'}{q}\leq \frac{1}{\alpha}(q-q')$.
Using the inequality from Lemma~\ref{inegalitéstylée} with $a=q'$ and $b=q$ we get $$p_c(q)-p_c(q')\geq  -\frac{1}{\sqrt{\alpha}}\sqrt{q-q'}.$$
On the other side, we can write $q=(1-r)q'+r$ if we set $r=\frac{q-q'}{1-q'}\leq \frac{1}{1-\beta}(q-q')$. Again, using the inequality of Lemma~\ref{inegalitéstylée} with $a = q$ and $ b=q'$, we get $$p_c(q')-p_c(q)\geq - \frac{1}{\sqrt{1-\beta}}\sqrt{q-q'}.$$
We conclude by setting $C=\max\left(\frac{1}{\sqrt{\alpha}},\frac{1}{\sqrt{1-\beta}}\right)$ to get $|p_c(q)-p_c(q')|\leq C{|q-q'|}^{1/2}$ in $[\alpha,\beta$].
\end{proof}

\begin{proof}[Proof of Lemma~\ref{inegalitéstylée}]
Let $0<r<1$ be a real number and $0 \leq q\leq 1-r$. First, let us take a family $(\eta_e)_{e\in E}$ of independent uniform variables in $[0,1]$. By fixing a threshold $a$, we can associate a realization $G_q$ by using the Bernoulli variables $\mathbb{1}_{[0,a]}(\eta_e)$.

Let $(X_e)_{e\in E(G)}$ be a sequence of i.i.d random variables uniform in $\{-1,1\}$ and independent of the variables $(\eta_e)_{e\in E(G)}$. Let $(Y_e)_{e\in E(G)}$ be a sequence of i.i.d Bernoulli variables with parameter $A=2\frac{\sqrt{r}(1-\sqrt{r})}{1-r}=1-\frac{(1-\sqrt{r})^2}{1-r}<1$. The $Y_e$ are independent of every other variables. Let us now define two random variables $\omega^-_{e}$ and $\omega^+_{e}$ as $(\omega^+_{e},\omega^-_{e})=f(\eta_e,X_e,Y_e)$ in the following way:
\begin{itemize}
    \item If $\eta_e\in \: (q,q+r)$, then $\omega^+_{e,}=\omega^-_{e}=0$.
    \item Else, we look at the value of $Y_e$. If it equals $1$, then exactly one of the random variables $\omega^{\varepsilon}_{e}$ is set to $1$ while the other is set to $0$. This choice is made by looking at the sign of $X_e$.
    \item If the last condition fails, we set them both equal to $1$.
\end{itemize}
The reason for the value of the coefficient $A$ is that it is the value that makes $\omega^-_{e}$ and $\omega^+_{e}$ two independent Bernoulli variables with parameter $1-\sqrt{r}$:
\begin{flalign*}
    \Pro(\omega^-_{e}=\omega^+_{e}=1)&=(1-r)\left(1-2\frac{\sqrt{r}(1-\sqrt{r})}{1-r}\right)\\
    &=(1-r)\left(\frac{1-2\sqrt{r}+r}{1-r}\right)= (1-\sqrt{r})^2\\
    &=\Pro(\omega^+_{e}=1)\Pro(\omega^-_{e}=1).
\end{flalign*}
Note that by construction, all $\omega^{\varepsilon}_e$ are independent when $e$ varies. 
let us then define a random switching variable $\hat{\eta}_e$ independently for every edge in $G$. We set $\hat{\eta}_e=g(\eta_e,Z_e)$ where $(Z_e)_{e\in E(G)}$ is a sequence of i.i.d Bernoulli variables with parameter $\frac{q}{1-r}$ independent of everything else so far.
We construct $g$ as follows:
\begin{itemize}
    \item If $\eta_e\leq q$, then we set $\hat{\eta}_e=1$,
    \item If $\eta_e> q+r$, then we set $\hat{\eta}_e=0$,
    \item If $q<\eta_e\leq q+r$, then we set $\hat{\eta}_e=Z_e$ .
\end{itemize}
This ensures all $\hat{\eta}_e$ are independent Bernoulli variables with parameter $\frac{q}{1-r}$. 
Both families of variables are mutually independent, despite both``depending'' on $(\eta_e)_{e\in E}$. 
For example, we can calculate 

\begin{flalign*}
    \Pro(\hat{\eta}_e =1,\,\omega^-_{e}=\omega^+_{e}=1)
&=(1-q-r)\left(1-2\frac{\sqrt{r}(1-\sqrt{r})}{1-r}\right)\\ 
&=\left(1-q-r\right)\frac{\left(1-2\sqrt{r}+r\right)}{1-r}\\
&=\left(1-\frac{q}{1-r}\right)(1-\sqrt{r})^2\\
&=\Pro(\hat{\eta}_e =1)\Pro(\omega^+_{e}=1)\Pro(\omega^-_{e}=1).\\
\end{flalign*}
\begin{flalign*}
\Pro(\hat{\eta}_e =1,\,\omega^-_{e}=1)&=\Pro(\hat{\eta}_e =1,\,\omega^+_{e}=1)\\
&=q\left(1-\frac{\sqrt{r}(1-\sqrt{r})}{1-r}\right)\\
&=\frac{q}{1-r}(1-\sqrt{r})\\
&=\Pro(\hat{\eta}_e =1)\Pro(\omega^+_{e}=1)=\Pro(\hat{\eta}_e =1)\Pro(\omega^-_{e}=1)
\end{flalign*}. \\
These are all the calculations needed to prove independence for the Bernoulli variables.
Then, $(\omega,\hat{\eta})$ allows us to define a random graph $\hat{G}=(V^\star,E(\hat{\eta}))$ with percolation $\omega$ that has the same law as $G_{1-\sqrt{r},\frac{q}{1-r}}$.
If $a$ is real number in $[q,q+r]$, we also define $\overline{\eta}_e=\mathbb{1}_{[0,a]}(\eta_e)$ for every edge $e$ in $E$. This allows us to construct a random graph $\overline{G}=(V^\star,E(\overline{\eta}))$ with the same law as $G_a=G_{1,a}$. The coupling between $\overline{G}$ and $\hat{G}$ is such that they both agree on the  switching state of the edges which are open in $\hat{G}$. Also, every edge open in  $\hat{G}$ is open in $\overline{G}$ and is in the same switching state. The reason for this property is that $\hat{G}$ and $\overline{G}$ only disagree on the switching state of pairs of edges where $\eta_e\in [q,q+r]$ but these edges are by definition closed in $\hat{G}$. 

Independently of everything done so far and independently for all $e$, we add a random percolation of parameter $p$ in both these graphs to create of coupling between $G_{p(1-\sqrt{r}),\frac{q}{1-r}}$ and $G_{p,a}$. More formally, for every edge $e$ in $\overline{G}$, we close it with probability $1-p$ independently of everything done so far. If $e$ is also present in $\hat{G}$ and $e$ was closed in $\overline{G}$, we also close it in $\hat{G}$. This procedure create a new coupled percolation between $\hat{G}$ and $\overline{G}$. 

The result retains the following coupling property:If the cluster of the origin is infinite in $G_{p(1-\sqrt{r}),\frac{q}{1-r}}$, it is also the case for $G_{p,a}$. Note that the addition of a new percolation to both graphs is unambiguous, since every pair of edges that are not  fully closed are in the same switching state. 
We can therefore derive the inequality:
$$\theta\left(p\,(1-\sqrt{r}),\frac{q}{1-r}\right)\leq \theta(p,a).$$ 
meaning that 
\begin{flalign*}
    \theta\left(p\,(1-\sqrt{r}),\frac{q}{1-r}\right)>0 &\Longrightarrow\theta(p,a)>0,\\
    p\,(1-\sqrt{r})>p_c(\frac{q}{1-r}) &\Longrightarrow p>p_c(a),\\
p_c\left(\frac{q}{1-r}\right)&\geq (1-\sqrt{r})\,p_c(a)\quad \text{for all $a$ in $(q,q+r)$}.\end{flalign*}
 We conclude by setting $b=\frac{q}{1-r}\in [0,1]$.
 \end{proof}

\section{Strict monotonicity}\label{Sec:Mono}
Note that while strict monotonicity was proved in \cite{MS19} for covering graphs with strong hypothesis on the type of cover, weak monotonicity for general cover graphs was proved in \cite{BS96} In their article, Benjamini and Schramm show that percolation in the cover graph is more probable than in the base graph. In particular, this implies weak monotonicity of the critical parameter. 
We can directly use their result on every realization of our graph which shows that $p_c(G_q(\eta))\leq p_c(G)$. and we thus have,  $p_c(q)\leq p_c(G)$. Therefore, only the strict monotonicity is a difficult problem. In this section, we adapt the arguments of \cite{MS19} to prove Theorem~\ref{Thm:Mono}. The main difference lies in the fact that we do not control the geometry of our graph uniformly but only probabilistically. Making a precise statement out of the last sentence is the subject of Lemmas \ref{lem:BouclePartition}, \ref{ChoiceR} and \ref{lem:distrib-mag}.

\subsection{Preliminary lemmas}

Similarly to \cite{MS19}, we can dissect the proof of monotonicity into two main inequalities. To compare percolation on $G_q$ and $G$, we use the notion of \textbf{enhanced percolation} --- see \cite{AG91,BBR}. Our enhanced percolation is defined in the following way:
If during our exploration in $G$ the $r$-ball of a vertex $x$ is fully open (meaning every edge with both sides lying in the $r$-ball is open) then, with probability $s$, we can consider every vertex at distance exactly $r+1$ to be part of our connected component. The value $r$ et $s$ will be chosen later. 

Formally, $(\alpha_v)_{v\in V(G)}$ is of family of independent Bernoulli variables with parameter $s$ and independent of $\omega$. Then, given a configuration $(\omega,\alpha)\in \{0,1\}^{E}\times \{0,1\}^V$, the cluster of a vertex $o$ in enhanced percolation is constructed recursively starting with $\calC_o=\{o\}$ by alternating steps. 
At odd steps, we define $C_{2N+1}$ to be the union of all the $\omega$-clusters (that is, the clusters for usual percolation) of the vertices in $C_{2N}$. 
At even steps, $C_{2N+2}$ is the union of $C_{2N+1}$ and the set of vertices $v$ such that there is a $u\in C_{2N+1}$ that satisfies the following conditions :
\begin{enumerate}
    \item $d(u,v)=r+1$,
    \item $B_r(u)\subset C_{2N+1}$,
    \item $\omega_e=1$ for all edges with endpoints in $B_r(u)$,
    \item $\alpha_u=1$. 
\end{enumerate}
After an infinite number of steps, the cluster of $o$ is defined as the infinite union of the $C_n$. 

Note that the power given by augmentation is not symmetric. If the augmentation allows one to explore the vertex $y$ starting from $x$, it does not mean the vertex $x$ is visited in the exploration started at $y$. For a fixed parameter $s$, there has to exist a fixed critical parameter $p_c(G,s)$ such that $o$ is connected to infinity with positive probability if $p>p_c(G,s)$ and is almost surely not connected to infinity if $p<p_c(G,s)$. If $H$ is a (potentially random) graph, we will write $C^p_H(x)$ for the cluster of the vertex $x$ in $H$ for usual percolation and $\calC^{p,s}_H(x)$ for the cluster of $x$ in augmented percolation.
Therefore, to prove strict monotonicity it suffices to choose a correct parameter $s$ and prove these two inequalities

$$p_c(G_q)\overset{(1)}{\leq}p_c(G,s)\overset{(2)}{<} p_c(G),$$
which can be summed up in these two propositions.   

\begin{proposition}\label{Monoto1}
    Let $q$ in $(0,1)$, and $G$ be an infinite transitive graph that is not a tree. Let $o$ be a distinguished vertex of $G$ and $o'$ a lift of $o$ in $G_q$. Then there is a choice of $r$ such that for every $\varepsilon > 0$, we can choose $s>0$ such that the following holds. For every $p\in [\varepsilon,1]$, there is a coupling such that if $\calC^{p}_{G_q}(o')$ is infinite, then $\calC^{p,s}_{G}(o)$ is infinite. 
\end{proposition}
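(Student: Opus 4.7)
The plan is to construct a coupling between the $G_q$-percolation at parameter $p$ and an enhanced $G$-percolation at parameters $(p,s)$ such that an infinite cluster in $G_q$ forces an infinite enhanced cluster in $G$, inspired by the MS19 argument. Write the $G_q$-percolation variables as $(\omega^0_e,\omega^1_e)_{e\in E(G)}$, two independent Bernoulli$(p)$ families: assign $\omega^0_e$ to the $G_q$-edge incident to $x_0$ and $\omega^1_e$ to the one incident to $x_1$, for $e=\{x,y\}$. Set $\omega'_e:=\omega^0_e$, giving a Bernoulli$(p)$ base percolation on $G$. Pick $r$ large enough so that $B_r(o)$ contains a cycle, which is possible because $G$ is transitive and not a tree; by transitivity, each $B_r(v)$ contains such a cycle $C_v$.

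For each vertex $v$, define the local event $F_v$ depending only on $(\omega^1,\eta)$ restricted to $B_r(v)$, hence independent of $\omega'=\omega^0$: $(i)$ $\omega^1_e=1$ for every edge $e$ with both endpoints in $B_r(v)$, and $(ii)$ $C_v$ is switching (odd number of $\eta_e=1$ on $C_v$). A direct computation yields $\Pro(F_v)\geq \varepsilon^{|E(B_r(v))|}\cdot c(q,|C_v|) =: s_0>0$, uniformly over $p\in[\varepsilon,1]$ and $q\in(0,1)$. Setting $\alpha_v:=\mathbb{1}_{F_v}$, thinned by an independent coin to produce a Bernoulli$(s)$ marginal with $s\leq s_0$, gives an enhancement variable independent of $\omega'$. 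On $F_v\cap\{B_r(v)\text{ fully }\omega'\text{-open}\}$, both $B_r(v_0)$ and $B_r(v_1)$ are fully $\omega$-open in $G_q$, and the switching cycle $C_v$ joins $v_0$ to $v_1$, so both lifts of $B_r(v)$ lie in the $G_q$-cluster.

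Given this coupling, I would run a simultaneous exploration of $\calC^p_{G_q}(o_0)$ and $\calC^{p,s}_G(o)$, inductively maintaining the invariant $\pi(\calC^p_{G_q}(o_0))\subset \calC^{p,s}_G(o)$. Floor-$0$ propagation of the $G_q$-exploration feeds the $\omega'$-exploration directly via $\omega'=\omega^0$; each time the $G_q$-exploration reveals that some ball $B_r(v)$ around a cluster vertex is fully $\omega'$-open with $F_v$ holding, the enhancement at $v$ triggers in $G$ and adds $S_{r+1}(v)$ to the enhanced cluster, absorbing the floor-$1$ excursions projecting onto that neighbourhood. Since $|\pi(\calC^p_{G_q}(o_0))|\geq |\calC^p_{G_q}(o_0)|/2=\infty$ whenever the $G_q$-cluster is infinite, the invariant forces $|\calC^{p,s}_G(o)|=\infty$.

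The main obstacle I expect is two-fold. First, the variables $\mathbb{1}_{F_v}$ are not independent when balls $B_r(v)$ overlap, whereas the enhancement framework requires independent $\alpha_v\sim\mathrm{Ber}(s)$; this is resolved by restricting enhancement to a sparsified collection of basepoints at pairwise distance exceeding $2r$ and matching probabilities by an additional thinning. Second, the simultaneous exploration must be set up so that every edge is revealed only once and $F_v$ is measurable at the stopping time at which enhancement is applied: the gauge transformations $\phi_S$ let us fix $o'=o_0$ canonically, and the sequential exploration adapted from MS19 --- precisely the content of Lemmas~\ref{lem:BouclePartition}, \ref{ChoiceR} and \ref{lem:distrib-mag} on loops, choice of $R$, and the distribution of magnetization --- handles the matching of the $G_q$-cluster's growth with the enhanced $G$-cluster.
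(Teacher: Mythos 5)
Your proposal runs in the wrong direction. The coupling that the proof actually needs (and the one the paper constructs) establishes that the enhanced cluster $\calC^{p,s}_G(o)$ is contained in the $\pi$-image of the $G_q$-cluster: the paper builds sets $C_\infty\subset V(G)$ with the law of $\calC^{p,s}_G(o)$ and $C'_\infty\subset\calC^p_{G_q}(o')$, and shows that $\pi$ \emph{surjects} $C'_\infty$ onto $C_\infty$, so that $\calC^{p,s}_G(o)$ infinite forces $\calC^p_{G_q}(o')$ infinite, hence $p_c(G_q)\leq p_c(G,s)$. Your invariant $\pi\big(\calC^p_{G_q}(o_0)\big)\subset\calC^{p,s}_G(o)$ yields the opposite implication and would prove $p_c(G,s)\leq p_c(G_q)$, which is useless here (and in fact trivially true, since enhancement only helps). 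In the derivation of Theorem~\ref{Thm:Mono} one starts from $\calC^{p,s}_G(o)$ infinite (via Proposition~\ref{Monoto2}) and must conclude $\calC^p_{G_q}(o')$ infinite; your coupling cannot be applied at all at that point. (The direction written in the statement of the proposition is a slip; the body of the paper's argument and its use in Theorem~\ref{Thm:Mono} make the intended direction unambiguous.)

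Even leaving the direction aside, the proposed invariant cannot be maintained. Once you set $\omega'=\omega^0$, the $G$-exploration only sees the floor-$0$ percolation. But the $G_q$-cluster of $o_0$ is free to make a long excursion through floor $1$: after crossing a switching edge it uses $\omega^1$-variables that are \emph{independent} of $\omega'$, and there is no mechanism forcing the $\omega'$-cluster in $G$ to follow the projection of that excursion. Your enhancement at $v$ only absorbs paths that stay inside $B_{r+1}(v)$, so the inclusion $\pi(\calC^p_{G_q}(o_0))\subset\calC^{p,s}_G(o)$ breaks as soon as the floor-$1$ excursion leaves that ball. In the paper the problem is inverted: the $G$-exploration leads, and the $G_q$-exploration follows by choosing, edge by edge, a specific lift in $\hat{G_q}$, which is why the surjection $C'_\infty\twoheadrightarrow C_\infty$ is achievable while your inclusion is not.

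Finally, your fix for the lack of independence of the $\mathbb{1}_{F_v}$ — restricting enhancement to a sparse set of basepoints — changes the augmented-percolation model and so does not match the model used in Proposition~\ref{Monoto2}. The paper avoids this entirely with a multigraph trick: every edge of $G$ is replaced by $M=|B_{r+2}(u)|+1$ parallel copies, carrying independent Bernoulli$(\hat{p})$ variables with $\hat{p}=1-(1-p)^{1/M}$, so that a single $G$-edge can be probed once for $p$-exploration and up to $M-1$ further times for $s$-exploration at overlapping centres $u$, with fresh independent randomness each time. Combined with the $\beta_x$ variables of Lemma~\ref{lem:distrib-mag} (which replace the deterministic bounded-fibre hypothesis of~\cite{MS19} by a probabilistic one), this is what makes the $\alpha_u$ genuinely i.i.d.\ Bernoulli$(s)$ for the \emph{unchanged} enhancement model. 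These two devices — the multigraph and the $\beta_x$ thinning — are the technical heart of the argument and are missing from your plan.
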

\begin{proposition}[Martineau-Severo]\label{Monoto2}
    Let $G$ be an infinite transitive graph with $p_c(G)<1$. Then for any choice of $r\geq 1$, the following holds: for every $s\in (0,1]$, there exists $p_c(G,s)<p_c(G)$ such that for every $p\in (p_c(G,s),1]$, the cluster $\calC^{p,s}_G(o)$ is infinite with positive probability.  
\end{proposition}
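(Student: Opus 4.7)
The plan is to apply the essential-enhancement framework of Aizenman--Grimmett, in the form developed by Martineau--Severo \cite{MS19} for general transitive graphs. I would fix $r\geq 1$ and $s\in (0,1]$, and work with the truncated events $A_n=\{o\leftrightarrow S_n(o)\}$ and $f_n(p,s):=\Pro_{p,s}[A_n]$ in $(p,s)$-enhanced percolation; both $f_n$ and its limit $\theta^s(p)$ are increasing in each argument by standard coupling. Applying the Margulis--Russo formula to the product measure $\omega\otimes\alpha$ yields
\[
\frac{\partial f_n}{\partial p}=\sum_{e}\Pro_{p,s}\bigl[e\text{ is }\omega\text{-pivotal for }A_n\bigr],\qquad \frac{\partial f_n}{\partial s}=\sum_{v}\Pro_{p,s}\bigl[v\text{ is }\alpha\text{-pivotal for }A_n\bigr].
\]

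The heart of the argument is a local \emph{finger lemma}: for every edge $e$ of $G$, I would exhibit a window $W_e$ of radius $O(r)$ around $e$ and a distinguished vertex $u_e\in W_e$ such that, in any configuration where $e$ is $\omega$-pivotal for $A_n$, modifying the restriction of $(\omega,\alpha)$ to $W_e$ to a specific witness pattern (roughly: open every edge of $B_r(u_e)$, close all edges exiting $S_{r+1}(u_e)$ except for a single well-chosen ``exit'' that replaces the role previously played by $e$, set $\alpha_{u_e}=1$ and every other $\alpha$-bit in $W_e$ to $0$) turns $u_e$ into an $\alpha$-pivotal vertex for $A_n$. Since $|W_e|$ is uniformly bounded by transitivity and local finiteness, the finite-energy property of the product measure furnishes a constant $c(r,p,s)>0$, uniform on compacts of $(0,1)\times(0,1]$, lower bounding the cost of the modification. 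Summing over $e$ and observing that each vertex $v$ lies in only boundedly many windows, one obtains the differential inequality
\[
\frac{\partial f_n}{\partial p}(p,s)\;\leq\; C(r,p,s)\,\frac{\partial f_n}{\partial s}(p,s),
\]
uniform in $n$, with $C$ bounded on compacts of $(0,1)\times(0,1]$.

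I would then integrate this inequality along the trajectory $\gamma(t)=(p_c(G)+\delta-t,\; s_0+Ct)$ for fixed $s_0\in (0,s/2)$ and small $\delta>0$. The chain rule gives $\tfrac{d}{dt}f_n(\gamma(t))\geq 0$, hence
\[
f_n\bigl(p_c(G)-\delta,\; s_0+2C\delta\bigr)\;\geq\; f_n\bigl(p_c(G)+\delta,\; s_0\bigr)\;\geq\;\theta\bigl(p_c(G)+\delta\bigr)\;>\;0
\]
uniformly in $n$, the last inequality being standard supercriticality on $G$. Monotonicity in $s$ upgrades this to $f_n(p_c(G)-\delta,s)>0$ uniformly in $n$ as soon as $2C\delta\leq s/2$, and letting $n\to\infty$ gives $\theta^s(p_c(G)-\delta)>0$, whence $p_c(G,s)\leq p_c(G)-\delta<p_c(G)$.

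The main obstacle is the finger lemma. Unlike in $\Z^d$, we have no explicit picture of balls in a general transitive graph, so exhibiting the witness pattern and checking that $u_e$ is genuinely $\alpha$-pivotal after modification must be performed abstractly, using only vertex-transitivity, local finiteness, and the essentiality of the enhancement (the new edges from $u_e$ to $S_{r+1}(u_e)$ are long-range shortcuts that cannot be created by nearby open edges alone, which is where the hypothesis $p_c(G)<1$ enters to prevent the enhancement from being trivialised). This is precisely the content of the essential-enhancement theorem of \cite{MS19}, which we may invoke once we verify that the $(r,s)$-enhancement defined in this section is essential in their sense.
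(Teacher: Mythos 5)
The paper does not attempt to prove this proposition at all: it simply states that Proposition~\ref{Monoto2} \emph{is} Proposition~4.2 of \cite{MS19} and invokes it as a black box. Your sketch is a reasonable reconstruction of the Aizenman--Grimmett differential-inequality machinery that underlies the Martineau--Severo result (Margulis--Russo, local modification from a $p$-pivotal edge to an $\alpha$-pivotal vertex, integration along a sloped trajectory through the supercritical phase of $G$), and in the end you also defer to \cite{MS19} for the hard step, the finger lemma in a general transitive setting. So the net approach is the same as the paper's, just with the intermediate scaffolding spelled out.

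One small inaccuracy in your exposition: you locate the role of the hypothesis $p_c(G)<1$ inside the essentiality discussion (``to prevent the enhancement from being trivialised''), but essentiality of the $r$-ball enhancement is a purely geometric condition, available for $r\geq 1$ whenever $G$ contains a cycle, independently of $p_c$. The hypothesis $p_c(G)<1$ is used instead at the integration step: one needs $\theta\bigl(p_c(G)+\delta\bigr)>0$ for some admissible $\delta>0$ (equivalently, a non-degenerate supercritical window inside $(0,1)$) so that the trajectory has a positive quantity to carry into the subcritical regime, and so that the constant $C$ from the finger lemma can be controlled on a compact of $(0,1)\times(0,1]$ containing the whole trajectory. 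This does not affect the overall correctness of your sketch, only the attribution of where the hypothesis bites.
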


 Proposition~\ref{Monoto2} is Proposition 4.2 from \cite{MS19}. We now focus on the proof of Proposition~\ref{Monoto1}. But first, we want to show that these two propositions actually establish Theorem~\ref{Thm:Mono}.
\begin{proof}[Proof of Theorem~\ref{Thm:Mono} assuming Proposition \ref{Monoto1}]
First set $\varepsilon=p_c(G)/2$ and pick $s$ and $r$ according to Proposition~\ref{Monoto1}. By Proposition~\ref{Monoto2}, for $p>p_c(G,s)$ the cluster $\calC^{p,s}_G(o)$ is infinite with positive probability. By Proposition~\ref{Monoto1} we know that there is a coupling such that $\calC^{p}_{G_q}(o)$ is infinite with positive probability meaning $p\geq p_c(G_q)$. In particular, $p_c(G_q)\leq p_c(G,s)$ and we can conclude using $p_c(G,s)<p_c(G)$ from Proposition~\ref{Monoto2}.
\end{proof}

Recall that a set of vertices $A$ in $G$ is \textbf{$R$-dense} if any vertex can reach an element of $A$ in at most $R$ steps. 
\begin{lemma}{\label{lem:BouclePartition}}
    Let $G$ be an infinite transitive graph that is not a tree.
    Let $C=x_0,\dots,x_N=x_0$ be a cycle in $G$ with no repeating vertex. Then there exists a sequence $C_1,\dots,C_k,\dots$ of disjoint cycles and a partition $(P_i)_{i\in\N}$ of the vertices of $G$ with $C_i\subset P_i$ such that $C_k=\phi_k(C)$ with $\phi_k\in \Aut(G)$, whose union is $R$-dense for some $R>0$. In addition, it is also possible to choose $(P_i)$ such that there exists $M>0$ such that $|P_k|\leq M$ for all $k$.  
\end{lemma}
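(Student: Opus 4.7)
The plan is to construct the cycles $(C_k)$ greedily by a maximality argument, to deduce $R$-density from maximality together with transitivity, and then to obtain the partition $(P_k)$ by assigning each vertex to its closest selected cycle. Concretely, first I would look at the $\Aut(G)$-orbit $\mathcal{O} := \{\phi(C) : \phi \in \Aut(G)\}$ of $C$, viewed as a set of finite vertex-subsets of $V(G)$. Since $V(G)$ is countable, so is $\mathcal{O}$, and I can enumerate it as $D_1, D_2, \ldots$. Building $(C_k)$ inductively, I include $D_{n+1}$ as a new cycle if it is disjoint from every $C_k$ already selected, and discard it otherwise. This yields a pairwise disjoint family of cycles, each of the form $\phi_k(C)$ for some $\phi_k \in \Aut(G)$, and the family is maximal in the sense that for every $\phi \in \Aut(G)$, $\phi(C)$ meets at least one $C_k$.

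Next I would set $R := \mathrm{diam}_G(C) + 1$ and check that $\bigcup_k C_k$ is $R$-dense. If some vertex $v$ satisfied $d(v, \bigcup_k C_k) > R$, transitivity would supply $\psi \in \Aut(G)$ with $\psi(x_0) = v$; then $\psi(C) \subset B_{\mathrm{diam}(C)}(v)$ would stay at distance at least one from $\bigcup_k C_k$ and in particular remain disjoint from it, contradicting maximality. Density follows, and the family $(C_k)$ is automatically infinite since $G$ is.

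For the partition, I would assign each $v \in V(G)$ to $P_k$ by the rule: take the (unique, by disjointness) index with $v \in C_k$ if $v$ lies in some selected cycle, and otherwise $k(v) := \min\{j : d(v, C_j) \leq R\}$, which is well-defined by density. This gives a partition $(P_k)_{k \in \N}$ of $V(G)$ with $C_k \subset P_k \subset B_R(C_k)$. Because $G$ is transitive and locally finite, the $R$-ball has cardinality $b := |B_R(x_0)| < \infty$ independent of the basepoint, and $|C_k| = |C| = N$ for all $k$ by transitivity, so
$$|P_k| \;\leq\; |B_R(C_k)| \;\leq\; \sum_{y \in C_k} |B_R(y)| \;=\; N \cdot b \;=: M,$$
uniformly in $k$.

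The hard part is keeping the pieces $|P_k|$ uniformly bounded while simultaneously covering $V(G)$ and containing each $C_k$: this is exactly what forces the partition to be defined by proximity at the specific radius $R = \mathrm{diam}_G(C) + 1$ used to establish density, rather than by cells of unbounded radius, and it is where both transitivity (uniform ball cardinality) and local finiteness (ball cardinality finite at all) are used essentially. The non-tree hypothesis serves only to guarantee that a cycle $C$ exists in the first place; once $C$ is given, the remainder is a clean transitive-geometry argument.
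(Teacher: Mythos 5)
Your argument matches the paper's: extract a maximal disjoint family from the $\Aut(G)$-orbit of $C$, deduce $R$-density from maximality together with transitivity, and partition $V(G)$ by proximity to the selected cycles, bounding $|P_k|$ by $|B_R(C)|$. The one divergence is the tie-break rule: the paper assigns a vertex equidistant from several cycles to the same cell as one of its neighbours so that each $P_k$ is connected (a property it invokes informally in the next lemma), whereas your min-index rule does not guarantee connectivity of the cells --- but since the lemma statement does not require it, your proof is complete as written.
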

\begin{proof}
$\Aut(G)$ acts on the set of cycles in the graph $G$ and from the orbit of $C$, we can extract an infinite sequence of disjoint cycles that is maximal with respect to inclusion and set $A=\bigcup_{i\in \N}C_i$. If such a maximal set was not $R$-dense for any $R$, it would be possible to find in $G$ balls of arbitrary radius that do not intersect $A$. In particular, one could find a ball which radius is bigger than the length of the cycle. This would go against the maximality of $A$ as we could fit a new cycle isomorphic to $C$ within this ball. We therefore know that such a set is $R$-dense for some $R$. We take the minimal $R$ for this property.  
To create a partition from this sequence, we assign each vertex to the cycle $C_i$ it is closest to. When a vertex is at the same distance from different cycles, we arbitrarily decide it belongs to the same cell as one of its neighbors. We call $P_i$ the set of vertices associated with $C_i$ such that $V(G)=\bigcup_{i\in \N}P_i$. Note that $P_i\subset B_R(C_i)$ and $|B_R(C_i)|=|B_R(C)|$ so the cells have bounded cardinality. By construction, any vertex of $P_i$ at distance $d>0$ from $C_i$ has at least one neighbor in $P_i$ at distance $d-1$ from $C_i$ so $P_i$ is connected since $C_i$ is connected. 
\end{proof}
The next lemma is the one that determine a suitable value of $r$ depending on the geometry of the graph $G$. 
\begin{lemma}\label{ChoiceR}
    Let $G$ be an infinite transitive graph that is not a tree. Fix $q\in (0,1)$ and $(C_i)_{i\in\N}$ an $R$-dense sequence of disjoint cycles all isomorphic. There is a choice of $r\in \N$ such that for $x \in V(G_q)$,  $Z(x,r):=\pi^{-1}(B_r(\pi(x))$ contains $\pi^{-1}(C_i)$, with $x\in P_i$. There exists $p_0>0$ independent of $x$ such that $Z(x,r)$ is connected with probability at least $p_0$. 
\end{lemma}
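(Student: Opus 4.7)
The proof splits into two parts. First I would fix $r$ from the bounded geometry of the partition $(P_i)$ produced by Lemma~\ref{lem:BouclePartition}, and then I would reduce connectedness of $Z(x,r)$ to a cycle-parity event supported on $C_i$.

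For the first part, let $C$ be the base cycle of Lemma~\ref{lem:BouclePartition}, set $N:=|C|$ and $D:=\max\{d_G(u,v):u,v\in C\}$. Since each $C_i=\phi_i(C)$ with $\phi_i\in\Aut(G)$, the same diameter bound $D$ holds in $G$ for every $C_i$. By construction $P_i\subset B_R(C_i)$, so any $v\in P_i$ and any $c\in C_i$ satisfy $d_G(v,c)\leq d_G(v,C_i)+D\leq R+D$. Setting $r:=R+D$ gives $B_r(v)\supset C_i$ whenever $v\in P_i$, and pulling back by $\pi$ yields $Z(x,r)\supset \pi^{-1}(C_i)$ whenever $\pi(x)\in P_i$.

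For the second part, the main ingredient is the following standard gauge characterization of connectedness of a $2$-cover, already implicit in the gauge transformations $\phi_S$ of Section~1. For a finite connected subgraph $H\subset G$, the subgraph $\pi^{-1}(H)\subset G_q$ is a random $2$-cover of $H$ depending only on $\eta|_{E(H)}$, viewed as a $1$-cochain with values in $\mathbb{Z}/2$, and it is connected if and only if $\eta|_{E(H)}$ is not a coboundary, equivalently some cycle of $H$ carries an odd number of switching edges. Indeed, if $\eta|_{E(H)}=df$ for some $f:V(H)\to\mathbb{Z}/2$ then the gauge transformation $\phi_S$ with $S=f^{-1}(1)$ trivializes the switching pattern and exhibits $\pi^{-1}(H)$ as two disjoint copies of $H$; conversely, if $\pi^{-1}(H)$ has two components, labeling them appropriately produces such an $f$.

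Combining the two parts, apply the characterization with $H=B_r(\pi(x))$: the event that $Z(x,r)$ is connected contains the event that the cycle $C_i\subset H$ carries an odd number of switching edges. Since $(\eta_e)_{e\in C_i}$ are i.i.d.\ Bernoulli$(q)$ and $|C_i|=N$, a direct parity computation gives
$$\PR\!\left[\sum_{e\in C_i}\eta_e\ \text{is odd}\right]=\frac{1-(1-2q)^N}{2},$$
which is strictly positive for every $q\in(0,1)$ because $|1-2q|<1$. This lower bound is uniform in $x$ by transitivity of $G$, so setting $p_0:=\frac{1-(1-2q)^N}{2}$ concludes the proof. The only mildly delicate point is justifying the gauge characterization of connectedness of a $2$-cover; once that is accepted, the rest is elementary combinatorics.
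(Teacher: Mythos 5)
Your proof is correct, and the overall plan matches the paper's: fix $r$ from the bounded geometry of the cells $P_i$, then reduce connectedness of $Z(x,r)$ to the parity event that $C_i$ carries an odd number of switching edges. Where you diverge is the justification of the second reduction. The paper argues by hand: it lifts an arbitrary path between $\pi(y)$ and $\pi(z)$ inside $B_r(\pi(x))$, notes that the lift either already connects $y$ to $z$ or connects $y$ to $T(z)$, and then shows $y$ and $T(y)$ are joined by lifting a loop that goes out to $C_i$, once around the cycle, and back --- the odd parity of $\eta$ on $C_i$ is what makes this loop lift end at the twin. You instead invoke the cohomological characterization of connected $2$-covers (that $\pi^{-1}(H)$ is connected iff $\eta|_{E(H)}$ is not a coboundary, i.e.\ has odd sum on some cycle of $H$), and observe that odd parity on $C_i$ obstructs being a coboundary. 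The two are logically equivalent; your route is a clean packaging of the same idea and matches the gauge-transformation viewpoint the paper already uses for the connectivity Proposition in Section~2, while the paper's version is more self-contained. Two minor notes: your choice $r=R+D$ with $D$ the $G$-diameter of the cycle is a slightly sharper bound than the paper's $r=R+N$ (both are valid since $D\leq N$), and your closed form $p_0=\tfrac{1-(1-2q)^N}{2}$ is the same quantity as the paper's binomial sum, just evaluated.
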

\begin{remark}\label{propR}
    Note that by construction, an edge in $G_q$ is adjacent to $Z(x,r)$ if and only if its twin is also adjacent. The same property can be found in \cite{MS19}, despite a different definition of $Z(x,r)$.  
\end{remark}
\begin{proof}
Let $N=|C|$ be the length of the cycle. We know that there is a vertex in $C_i$ at distance at most $R$ from $\pi(x)$ meaning $B_r(\pi(x))$ with $r=R+N$ contains $C_i$ and $Z(x,r)$ contains $\pi^{-1}(C_i)$ The lift of $C_i$ is connected if and only if it is switching and we can calculate this probability $$c=\sum_{
\begin{array}{l}
      k=1  \\
      k \text{ odd}
\end{array}}^{N}\binom{N}{k}q^k(1-q)^{N-k},$$ which is positive and independent of $x$. We conclude by pointing out that if $C_i$ is connected, it is also the case for $Z(x,r)$. Let $y$ and $z$ be two vertices in $Z(x,r)$. Because $\pi(Z(x,r))=B(\pi(x),r)$ is connected, there is a path from to $\pi(z)$ to $\pi(y)$ that we can lift in $Z(x,r)$ starting at $z$. Either the lift of this path joins $z$ and $y$ and we are finished, or the lift joins $z$ and $T(y)$. We now need to prove that $y$ and $T(y)$ are connected. Let $u$ be a vertex in $C_i$. In $B(\pi(x),r)$, we can join $\pi(y)$ to $u$ follow the cycle $C_i$ back to $u$ and come back to $\pi(y)$. Lifting this path back in $G_q$ gives us a path that link  $y$ to $T(y)$ when $C_i$ is switching. We have therefore proved that any two vertices of $Z(x,r)$ are linked through a path in $Z(x,r)$ and so are connected with positive probability at least $p_0=c$. 
\end{proof}
\begin{remark}
Following the same proof, we know that since the $Z(x,r)$ are disjoint if the vertices $x$ are far enough, then for $q\in(0,1)$, $G_q$ is almost surely connected. 
\end{remark}
Before proving Proposition~\ref{Monoto2}, we need one last lemma that builds a set of independent Bernoulli variables that assures us a vertex can reach its twin in a bounded number of steps. 
\begin{lemma}{\label{lem:distrib-mag}}
Let $G$ be an infinite transitive graph that is not a tree. There exists $D$ in $\N$ such that, on a larger probability space, there exist $t>0$ and independent Bernoulli variables $\beta_x$ with parameter $t$ such that the following holds: $\beta_x=1$ implies that both lift of $x$ are connected in at most $D$ steps.
\end{lemma}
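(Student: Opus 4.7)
The plan is to extract the family $(\beta_x)$ from the geometric decomposition supplied by Lemma~\ref{lem:BouclePartition}: each cell will come equipped with a short cycle whose \emph{switching} state certifies a short path between the two lifts of every vertex in the cell, and the disjointness of these cycles will deliver the required independence.

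First I would apply Lemma~\ref{lem:BouclePartition} to obtain a partition $(P_i)_{i\in\N}$ of $V(G)$ with $|P_i|\leq M$, together with pairwise vertex-disjoint cycles $C_i\subset P_i$, all isomorphic to a fixed cycle $C$ of length $N$, and with $P_i\subset B_R(C_i)$. Set $D := 2R + N$. For each $i\in\N$ let $\alpha_i := \mathbb{1}\{\pi^{-1}(C_i)\ \text{is switching}\}$; since the $C_i$ are edge-disjoint and $\alpha_i$ depends only on $(\eta_e)_{e\in E(C_i)}$, the family $(\alpha_i)_{i\in\N}$ is i.i.d.\ Bernoulli with common parameter $c := \PR\bigl[\sum_{e\in E(C)}\eta_e\ \text{is odd}\bigr]\in(0,1)$.

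Second, I would establish the geometric claim that $\alpha_i = 1$ forces, for every $x\in P_i$, a path of length at most $D$ in $G_q$ from $x_0$ to $x_1$. This is a mild variant of the argument already appearing in Lemma~\ref{ChoiceR}: fix $u\in C_i$ with $d(x,u)\leq R$, pick a shortest $x$-to-$u$ path in $G$, and lift it from $x_0$; it terminates at some lift $u_\varepsilon$ of $u$. The switching hypothesis makes $\pi^{-1}(C_i)$ a single cycle of length $2N$, so there is a path of length at most $N$ within it joining $u_\varepsilon$ to $T(u_\varepsilon)$. Lifting the reverse $u$-to-$x$ path from $T(u_\varepsilon)$ terminates at $T(x_0)=x_1$, and concatenation gives a path of length at most $2R + N = D$.

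Finally, to pass from the cell-indexed family $(\alpha_i)$ to a vertex-indexed family $(\beta_x)$, I would enlarge the probability space with an independent family $(V_x)_{x\in V(G)}$ of i.i.d.\ Bernoulli$(1/M)$ variables and define
$$\beta_x := \alpha_{i(x)} \cdot V_x,\qquad t := c/M.$$
Each $\beta_x$ is Bernoulli$(t)$, and $\beta_x = 1$ implies $\alpha_{i(x)} = 1$, hence $d_{G_q}(x_0,x_1)\leq D$ by the geometric claim. The $V_x$ make the $\beta$'s independent across distinct cells, and the residual intra-cell dependence through the shared factor $\alpha_{i(x)}$ is killed by an extra thinning step that keeps, in each cell, a single uniformly chosen representative (using another independent source of randomness); the bounded cell size $|P_i|\leq M$ from Lemma~\ref{lem:BouclePartition} keeps the resulting parameter $t$ strictly positive.

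The main obstacle is precisely the tension between independence and locality: the only witness of a short path from $x_0$ to $x_1$ at our disposal is the switching of a cycle close to $x$, yet two vertices in the same cell must rely on the same cycle and hence on the same $\alpha_i$. Lemma~\ref{lem:BouclePartition} resolves this by guaranteeing uniformly bounded cells, which is exactly what makes the factor $1/M$ thinning affordable and yields a genuinely i.i.d.\ family indexed by vertices.
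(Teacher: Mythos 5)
Your first two steps match the paper's argument: you correctly invoke Lemma~\ref{lem:BouclePartition}, define $\alpha_i$ as the switching indicator of the lifted cycle $C_i$, note that disjointness of the cycles makes $(\alpha_i)_i$ i.i.d.\ Bernoulli$(c)$, set $D=2R+N$, and the geometric claim is sound. The gap is in the passage from cell-indexed to vertex-indexed Bernoullis. With $\beta_x=\alpha_{i(x)}V_x$ and $V_x$ i.i.d.\ Bernoulli$(1/M)$ independent of $\alpha$, two vertices $x\neq x'$ in the same cell satisfy $\PR[\beta_x=\beta_{x'}=1]=c/M^2$ while $\PR[\beta_x=1]\PR[\beta_{x'}=1]=c^2/M^2$; since $c<1$ these differ, so the $\beta_x$ are positively correlated within a cell. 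Your proposed repair, ``keep a single uniformly chosen representative per cell,'' makes matters worse, not better: if at most one $\beta_x$ per cell can equal $1$, then $\beta_x\beta_{x'}=0$ almost surely for distinct $x,x'$ in the same cell, which is anti-correlation, not independence. No amount of extra thinning can reconcile ``at most one success per cell'' with a product Bernoulli law, because independent Bernoulli$(t)$ variables with $t>0$ are simultaneously $1$ with positive probability.

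The ingredient you are missing is the \emph{max-decomposition} used by the paper: instead of multiplying $\alpha_i$ by fresh independent coins, one \emph{realizes} $\alpha_i$ as $\max_{x\in P_i} V_x$ for genuinely independent Bernoullis. Concretely, enlarge the probability space so that conditionally on $\alpha_i=0$ all $V_x$ ($x\in P_i$) are $0$, and conditionally on $\alpha_i=1$ the tuple $(V_x)_{x\in P_i}$ is sampled from the law of i.i.d.\ Bernoulli$\bigl(1-(1-c)^{1/|P_i|}\bigr)$ conditioned on at least one success. Unconditionally each $V_x$ is Bernoulli$\bigl(1-(1-c)^{1/|P_i|}\bigr)$, the whole family $(V_x)_{x\in V(G)}$ is independent (independence across cells coming from disjointness, independence within a cell from the choice of parameter making the conditional sampling consistent with a product law), and crucially $V_x\leq \alpha_{i(x)}$ pointwise, which is what you need for the implication to $d_{G_q}(x_0,x_1)\leq D$. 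A final independent multiplicative thinning by Bernoulli$\bigl(\frac{1-(1-c)^{1/L}}{1-(1-c)^{1/|P_i|}}\bigr)$, where $L=\sup_i|P_i|$, equalizes the parameters to $t=1-(1-c)^{1/L}>0$. Your structure (dense disjoint cycles, switching witnesses a short twin path, bounded cells) is right; only this distribution-splitting step needs to replace the product-and-thin construction.
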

\begin{proof}
Take $(C_i)$ and $R$ that satisfy the conclusion of Lemma~\ref{lem:BouclePartition}. If we denote by $c$ the probability that the lift of  $C_i$ is switching, then the $\pi^{-1}(P_i)$ is connected with probability $p_i$ at least $c$. Given a configuration $(\eta_e)_{e\in E(G)}$. let us call $O_i$ the independent Bernoulli variables of parameter $c$ that track whether the lift of $C_i$ is connected. 

We then want to ``distribute'' the probability of connectivity to all vertices in $P_i$. To that purpose, we note the following fact:If $\mathcal{B}_1,\dots,\mathcal{B}_n$ are independent Bernoulli variables with parameter $1-(1-p)^{1/n}$ then $\mathcal{B}=\max(\mathcal{B}_1,....,\mathcal{B}_n)$ is a Bernoulli variable with parameter $p$. 
Reciprocally, by extending, if needed, our probability space we can suppose any Bernoulli variable $\mathcal{B}$ with parameter $p$ is the maximum of $N$ independent Bernoulli variables with parameter $1-(1-p)^{1/n} $. 

We use this decomposition with the variables $O_i$ and $n_i=|P_i|$ to assign to every vertex $x$ in $P_i$ a Bernoulli variable $V_x$  of parameter $1-\left(1-c\right)^{1/|P_i|}$ that is dominated by $O_i$. In addition, all $V_x$ are independent. Since the parameter of these Bernoulli variables may not be uniform between all cells (even though $|P_i|\leq |B_R(C)|=L$) we can multiply $V_x$ by another Bernoulli variable of parameter $\frac{1-\left(1-c\right)^{1/L}}{1-\left(1-c\right)^{1/|P_i|}}$ independent of everything done so far to get a Bernoulli variable $\beta_x$ with parameter $t=1-(1-c)^{1/L}>0$ and with the following property:The $(\beta_x)_{x\in G}$ are independent, identically distributed and if $\beta_x=1$ then $O_i=1$ and $\pi^{-1}(C_i)$ is switching with $x\in P_i$ (generally, we can always decrease the parameter of a Bernoulli variable from $a$ to $b$ by multiplying it with an independent Bernoulli variable of parameter $b/a$).

If the former is true, then by denoting $y$ a vertex of $C_i$, we can lift the shortest path from $x$ to $y$, walking along the cycle once and use the other lift of the path $y$ to $x$ to create a path between $x_0$ and $x_1$ in at most $D=2R+|C|$ steps. 
\end{proof}
Now that we have all the necessary lemmas, we can move on to the main proof.
\subsection{Coupling Proposition}

We  prove Proposition~\ref{Monoto1} by constructing step by step a coupling between a percolation on $G_q$ and an enhanced percolation on $G$. For completeness, we reproduce the totality of the argument from \cite{MS19}, with some changes due to the fact that the distance between a vertex and one of its twin is not deterministically bounded by a constant $D$. In our case, this bound is only probabilistically true meaning during the construction of the enhancement, we need to take into account the variables $\beta_x$ that track if a vertex can join its twin in less than $D$ steps for a specific $D$. During the process, we alternate steps of pure percolation and steps of enhanced percolation. The difference from \cite{MS19} is taken into account at steps of enhanced percolation. 

\begin{proof}[Proof of Proposition~\ref{Monoto2}]

Let us take $(C_i)$, $(P_i)$, $R$ and $r$ that satisfy the conclusion of Lemmas~\ref{lem:BouclePartition} and~\ref{ChoiceR}.
Let $M=|B_{r+2}(u)|+1$ for some vertex $u$ of $G$ (because $G$ is transitive, the choice of $u$ does not matter). For any graph $H$, we define the multigraph $\hat{H}$, to have the same vertex set $H$ and its edge set to be $E(H)\times \{1,...,M\}$ where $(\{x,y\},k)$ is an edge connecting $x$ and $y$. Percolation on $\hat{H}$ is done with parameter $\hat{p}=1-(1-p)^{1/M}$ so that percolation on $\hat{H}$ with parameter $\hat{p}$ is equivalent to percolation on $H$ with parameter $p$. 

Let $\eta = (\eta_e)_{e\in E(G)}$ describe the switching state of   edges in $G_q$. Setting a specific $\eta$ allows us to specify the variables $\beta_x$ described in Lemma~\ref{lem:distrib-mag}. Let us take $\omega$ a configuration for percolation on $\hat{G_q}$ with parameter $\hat{p}$.
We are going to describe an exploration of $\hat{G}$ and $\hat{G_q}$ creating two other variables $\kappa=(\kappa_e)_{e\in E(\hat{G})}$ and $\alpha=(\alpha_x)_{x\in V(G)}$ that follow the law of augmented percolation on $G$. They will be coupled in such a way, that the infinite cardinality of the cluster of the origin in $G_q$ with $\omega$ will imply the infinite cardinality of the cluster of augmented percolation in $G$ with $(\kappa,\alpha)$. 

Edges will be explored in two possible ways:$p$-exploration and s-exploration. $p$-exploration corresponds to the usual exploration in Percolation Theory, and s-exploration means the edge has been used in an ``Enhancement property''. Vertices may also be $s-$explored for the same reason. We say an edge is explored if it is either $s$-explored or $p$-explored. No edges will be both $p-$ and $s-$explored. The cluster of a vertex $o$ in bond percolation (resp. augmented bond percolation) in the graph $H$ will be written as $\calC^p_H(o)$ (resp. $\calC^{p,s}_H(o)$). 

Along the way, we will describe the sets $C_{\infty}\subset V(G)$, that follows the law of $\calC_{G}^{p,s}$, and $C'_{\infty}\subset V(G_q)$ that is stochastically dominated by $\calC_{G_q}^p(o)$.These sets will be defined as infinite unions of $C_{\ell,n}$ and $C'_{\ell,n}$, which will correspond to the vertices explored ``at each step''. To increment the variable $\ell$ by $1$, we will have to make a certain (potentially infinite) number of increments in $n$. From now on, we will refer to ``step'' as the set of action made in order to increment $\ell$ by one. On odd steps, we will simply explore the connected clusters of some set of vertices, just like in usual percolation. On even steps, however, we will exclusively explore our graph using the special property of augmented percolation. We decide a well-ordering of the vertices and edges of $G$ and $G_q$. In the following, by ``pick an element $a$ such that $P$'', we mean ``take the smallest $a$ such that $P$ holds''. 
\vspace{5mm}\\
\textbf{Properties of the process:}

\begin{enumerate}[label=\Alph*)]
    \item If an edge $e$ in $E(G)$ is $p$-explored, then there is a lift $e'$ of $e$ in $G_q$ such that the set of the $p$-explored lifts of $e$ is precisely $\left\{e'\right\} \times\{1, \ldots, M\}$.
    \item If an edge $e$ in $E(G)$ is $p$-unexplored, then all of its lifts are unexplored.
    \item Every element of $C'_{\ell, n}$ is connected to $o'$ by an $\omega$-open path.
    \item  For every edge $e$ in $G$ and each lift $e'$ of $e$ in $G_q$, the number of $s$-explored edges of the form $\left(e', k\right)$ is at most the number of vertex at distance at most $r+1$ from some endpoint of $e$ and is therefore always smaller than $M$. This property is the reason for our choice of $M$.
    \item The map $\pi$ induces a well-defined surjection from $C'_{\ell, n}$ to $C_{\ell,n}$.
\end{enumerate}
\textbf{Step 0 :}  Initially, no edge is either $p$-explored or $s$-explored and $C_0=\{o\}$ and $C'_0=\{o'\}$.\vspace{4mm}\\
\textbf{Step 2N+1:} Set $C_{2N+1,0}=C_{2N}$ and $C'_{2N+1,0}=C'_{2N}$. This step persists as long as there is an unexplored edge in G such that one if its endpoint lies in $C_{2N+1,n}$. 
\begin{enumerate}
    \item Take $e=\{u,v\}$ to be the smallest such edge with $u\in C_{2N+1,n}$.
    \item Pick $e'$ some lift of $e$ intersecting $C_{2N+1, n}'$.
    \item Declare $e$ and all ($e',k$) to be $p$-explored (they were unexplored before because of conditions A and B). 
    \item For every $k \leq M$, define $\kappa_{\left(e, k\right)}=\omega_{(e', k)}$.
    \item Set $\left(C_{2N+1, n+1}, C_{2N+1, n+1}'\right)=\left(C_{2N+1, n}, C_{2N+1, n}'\right)$ if all the $(e', k)$  are closed; otherwise, set $C_{2N+1, n+1}, =C_{2N+1, n} \cup\{v\}$ and $C'_{2N+1, n+1}=C'_{2N+1, n}\cup e'$.
\end{enumerate}
\vspace{1mm} When this step is finished, which occurs after finitely or countably many iterations, set $C_{2N+1}=\bigcup_{n} C_{2N+1, n}$ and $C'_{2N+1}=\bigcup_{n} C'_{2N+1, n}$.\vspace{4mm}
\newline
\textbf{Step 2N+2:} Set $C_{2N+2,0}=C_{2N+1}$ and $C'_{2N+2,0}=C'_{2N+1}$.
Check that at least one $s$-unexplored vertex in $C_{2N+1}$ whose $r$-ball is in $C_{2N+1}$ and fully open (meaning for every edge that lie in the $r$-ball, at least one of its copy is open) in $\kappa$. If it is the case, do the following (otherwise, finish this step):

\begin{enumerate}
    \item Take $u$ to be the smallest such vertex.
    \item Pick  $x \in C_{2N+1}' \cap \pi^{-1}(\{u\})\neq\emptyset$. 
    \item For each $p$-unexplored edge $e'$ in $Z(x, r)$, take its $s$-unexplored copy $\left(e', k\right)$ in $\hat{G_q}$ of smallest label $k$, and switch its status to $s$-explored. It is possible because a multi-edge can get $s$-explored less than $|B_{r+2}(u)|$ times during the exploration and $M$ has been set specifically larger than this amount by (D).
    \item If all these newly $s$-explored edges are open (so that $Z(x,r)$ is ``fully open''), then perform this substep. By remark \ref{propR}, for every $\mathcal{G}$-edge $e \in S_{r+\frac{1}{2}}(u)$,
    both of its lifts are adjacent to $Z(x, r)$ and by (A) one of them is $p$-unexplored. By (D) and the value of $M$, one of its copies $\left(e', k\right)$ is $s$-unexplored: pick that with minimal $k=: k_{e}$. Declare all these edges to be $s$-explored. If all these $\left(e', k_{e}\right)$ are open, then say that this substep is successful.
    \item Recall the variable $\beta_x$ that is associated with the vertex $x$ is a parameter $t$ and independent of others $\beta_y$. We simply say this substep is successful if $\beta_x=1$ meaning we are sure that both lift of $x$ are connected in at most $D$ steps in $Z(x,r)$ by construction.
    \item Conditioned on every step being successful we set $\alpha_u=1$. Otherwise, set $\alpha_u=0$ if one or more step failed (therefore $\alpha_u$ is a Bernoulli variable with parameter $s$ and the $\alpha_u$ variables are independent).
    \item If $\alpha_{u}=1$, then set $C_{2N+2, n+1}:=C_{2N+2, n} \cup S_{r+1}(u)$ and $C'_{2N+2, n+1}$ to be the union of $C_{2N+2, n}, Z(x, r)$, and the lifts of the elements $S_{r+1/2}(u)$ in $G_q$ that were chosen at substep $4.$. Notice that condition (C) continues to hold as in this case $Z$ is ``fully-open''. Otherwise, set $C_{2N+2, n+1}:=C_{2N+2, n}$ and $C_{2N+2, n+1}^{\prime}:=C'_{2N+2, n}$.
\end{enumerate}
When this step is finished, set $C_{2N+2}=\bigcup_{n} C_{2N+2, n}$ and $C_{2N+2}^{\prime}=\bigcup_{n} C_{2N+2, n}^{\prime}$.
\vspace{1mm}\\
\textbf{Step $\infty$ :} Set $C_{\infty}:=\bigcup_{N} C_{N}$ and $C_{\infty}^{\prime}:=\bigcup_{N} C_{N}^{\prime}$. Wherever $\kappa$ is undefined, define it as a random Bernoulli variable of parameter $\hat{p}$. In the same way, wherever $\alpha$ is undefined, toss independent Bernoulli random variables of parameter $s$, independent of everything done so far.
\vspace{5mm} \\
By construction, $C_{\infty}$ has the distribution of the cluster of the origin for the augmented percolation on with parameter $(p, s)$ on $\mathcal{G}$:it is the cluster of the origin of $\left(\left(\bigvee_{k} \omega_{e, k}\right)_{e}, \alpha\right)$ which has distribution $B(p)^{\otimes E(G)} \otimes B(s)^{\otimes V(G)}$. Besides, $C'_{\infty}$ is included in the cluster of $o'$ for $\left(\bigvee_{k} \omega_{e, k}\right)_{e}$, which is a $p$-bond percolation on $G_q$. Finally, the coupling guarantees that $\pi$ surjects $C'_{\infty}$ onto $C_{\infty}$ and therefore if $C_{\infty}$ has infinite cardinality it is also the case for $C'_{\infty}$ 
\end{proof}

\section{Sharpness at $q=1/2$}\label{Sec:Sharp}
Our goal in this section is to prove Theorem \ref{Thm:Sharp}.
It states that the tail size of the cluster of any given vertex, decreases exponentially in the subcritical regime. To prove Theorem \ref{Thm:Sharp}, we closely follow Vaneuville's strategy in \cite{Van23}. The difference between Vaneuville's case and ours is the change to the definition of an exploration and the addition of a new crucial lemma that is Lemma \ref{lem:trou_sto}. Our new definition of an exploration explores edges as well as vertices to partially determine the structure of the random graph. Lemma \ref{lem:trou_sto} was just an inequality from transitivity in \cite{Van23} but needed to be extended to a full lemma in our random graph. Lemma \ref{lem:trou_sto} is the only place where the hypothesis $q=1/2$ is needed. 

Note that the exponential decay is here stated by averaging over all the possibles values of $\eta$. A quenched version of Theorem \ref{Thm:Sharp} is 
\begin{theorem}\label{thm:exp-quenched}
    Let $G$ be a transitive infinite graph and let $\nu$ be the random graph measure of $G_{1/2}$. For every $p<p_c(1/2)$, there exists $c>0$ such that for $\nu$-almost every $\eta$, there exists $C>0$ such that
\[
\forall n \ge 0, \quad \Pro_p[|\calC_o|\geq n|\;\eta\;] \leq Ce^{-cn}.
\]
\end{theorem}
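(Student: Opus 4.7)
The plan is to derive Theorem~\ref{thm:exp-quenched} directly from the annealed bound in Theorem~\ref{Thm:Sharp} by combining Markov's inequality with the Borel--Cantelli lemma. The annealed statement can be written as
\[
\int \Pro_p\big[|\calC_o|\geq n\mid \eta\big]\, d\nu(\eta) \;=\; \Pro_{p,1/2}\big[|\calC_o|\geq n\big] \;\leq\; C e^{-cn},
\]
so the quenched tail is, on average, exponentially small. Picking any $c' \in (0,c)$, Markov's inequality then gives
\[
\nu\!\left(\eta : \Pro_p[|\calC_o|\geq n\mid \eta] > e^{-c' n}\right) \;\leq\; C e^{-(c-c')n}.
\]

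Since these probabilities are summable in $n$, the Borel--Cantelli lemma yields that for $\nu$-almost every $\eta$ there exists a (random, $\eta$-dependent) integer $N(\eta)$ such that $\Pro_p[|\calC_o|\geq n\mid\eta]\leq e^{-c' n}$ for every $n \geq N(\eta)$. For $n<N(\eta)$ the trivial bound $\Pro_p[|\calC_o|\geq n\mid \eta]\leq 1$ can be absorbed into the constant, by setting $C(\eta) := e^{c' N(\eta)}$; this gives
\[
\Pro_p[|\calC_o|\geq n\mid \eta] \;\leq\; C(\eta)\, e^{-c' n} \qquad \text{for all } n\geq 0,
\]
which is the desired quenched bound with the same deterministic rate $c'$ for almost every $\eta$.

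The only real subtlety is to ensure that the quantity $\Pro_p[|\calC_o|\geq n\mid \eta]$ is a bona fide $\eta$-measurable function, so that Markov's inequality applies under the law $\nu$; this follows from the fact that, conditional on $\eta$, the event $\{|\calC_o|\geq n\}$ depends only on finitely many percolation variables $\omega_{e'}$ on the (now deterministic) graph $G_q(\eta)$, and the conditional probability is a measurable function of $\eta$ by standard arguments on product spaces. I do not anticipate any substantive obstacle here: the whole argument is a routine quenched/annealed transfer, with the only price paid being that the exponential rate $c$ from Theorem~\ref{Thm:Sharp} has to be replaced by an arbitrary $c'<c$, while the prefactor $C$ becomes a random variable that is finite almost surely.
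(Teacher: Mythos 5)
Your proof is correct and follows essentially the same route as the paper's: apply Markov's inequality to the annealed bound to control $\nu\big(\Pro_p[|\calC_o|\geq n\mid\eta]\;\text{large}\big)$ at an exponential rate, then invoke Borel--Cantelli to pass to a quenched bound with an $\eta$-dependent prefactor. The only cosmetic difference is that the paper takes the threshold $Ce^{-cn/2}$ (yielding rate $c/2$) whereas you allow an arbitrary $c'<c$; both necessarily degrade the rate, and both produce a deterministic exponent with a random prefactor $C(\eta)$, which is exactly what the theorem asserts.
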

Although these are two different results, we can show that the annealed version implies Theorem \ref{thm:exp-quenched}. 
\begin{proof}[Proof of Theorem \ref{thm:exp-quenched} using Theorem \ref{Thm:Sharp}]
For $p<p_c(1/2)$, let us take $c$ and $C$ given by Theorem \ref{Thm:Sharp}. By the Markov inequality, we have 
$$\nu\Big(\Pro_p[|\calC_o|\geq n|\;\eta\;] \geq Ce^{-cn/2}\Big)\leq \Pro_p[|\calC_o|\geq n|]C^{-1}e^{cn/2}\leq C^{-1}e^{-cn/2}.$$ which is summable in $n$. By the Borel--Cantelli Lemma, it means that the exponential decay happens for $c/2$ and $C^{-1}$ almost surely for all but finitely many natural numbers. We can take a larger $C'$ to obtain a result for all $n\geq 0$.
\end{proof}

To prove Theorem \ref{Thm:Sharp}, we will follow the strategy of \cite{Van23}. We will introduce a ghost field $\calM$ to be a random subset of the vertex set $V(G_q)$ where each element $m\in V(G_q)$ has probability $1-e^{-h}$ of being in $\calM$, independently of each other. We denote $\Pro_{p,h}$ the product measure of this distribution with the percolation on the random graph $G_q$.
Let,
$$\psi_n(p) = \Pro_p\big[|\calC_o| \ge n\big] \quad $$
and 
$$m_h(p)=\Pro_{p,h} \big[ \calC_o \cap \calM \ne \emptyset \big].$$
Our inquiry will lead us to prove the following theorem, of which the main result of this section is a direct consequence.  
\begin{theorem}\label{thm:exp}
Let $G$ be a transitive infinite graph and take $p\in(0,1)$ and $h\in(0,+\infty)$. Then, letting $s=p(1-2m_h(p))$, we have
\[
\forall n \ge 0, \quad \psi_n(s) \le \frac{1}{1-m_h(p)} \psi_n(p) e^{-hn}.
\]
\end{theorem}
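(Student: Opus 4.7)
The plan is to adapt the exploration-based coupling strategy of Vaneuville~\cite{Van23} to our random-graph setting. I would construct on a single probability space three coupled objects: a percolation configuration on $G_{1/2}$ at parameter $p$, an independent ghost field $\calM$ of intensity $1-e^{-h}$, and a second percolation configuration on $G_{1/2}$ at parameter $s=p(1-2m_h(p))$, arranged so that whenever the $s$-cluster of $o$ has at least $n$ vertices, the $p$-cluster of $o$ also has at least $n$ vertices and remains disjoint from $\calM$. Given such a coupling, the factor $e^{-hn}$ arises as the probability that $n$ specified vertices all avoid $\calM$, while the factor $\frac{1}{1-m_h(p)}$ appears when one normalizes by $\Pro_{p,h}[\calC_o\cap\calM=\emptyset]$ to initialize the exploration at the origin.

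To build this coupling, I would introduce a step-by-step exploration of the $p$-cluster of $o$ in $G_{1/2}$ that simultaneously reveals three layers of randomness: the switching variables $\eta_e$ for the base edges being probed, the percolation status of the resulting lifted edges, and the ghost indicators at each newly discovered vertex. A deterministic total order on $V(G)$ and $E(G)$ dictates which boundary edge is examined next, chosen so that the unexplored portion retains its original independent distribution after each conditioning step. At each examined lifted edge $e'$, I would introduce an independent auxiliary Bernoulli variable whose role is to decide whether a $p$-open edge is additionally declared $s$-open; its parameter is tuned so that an edge is $s$-open precisely when it is $p$-open and neither of its endpoints ``reaches $\calM$'' via an as-yet-unexplored branch. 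The factor $1-2m_h(p)$ in the definition of $s$ then emerges from a union bound over the two endpoints of $e'$, each of which contributes a ``reaches a ghost'' term of probability at most $m_h(p)$.

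The main obstacle is controlling these ``reaches a ghost'' events uniformly along the exploration, which is the content of Lemma~\ref{lem:trou_sto} and is the only step in the proof where the hypothesis $q=1/2$ is required. In the deterministic setting of~\cite{Van23}, the analogous bound is an immediate consequence of transitivity of $G$, but here the conditional law of the unexplored portion of $G_{1/2}$ depends on the pattern of $\eta_e$ already revealed, and $G_{1/2}$ itself is generally not transitive. The resolution relies on the gauge symmetry at $q=1/2$: each $\phi_S$ is then a measure-preserving involution of $G_{1/2}$, allowing one to map an unexplored region to a canonical root without altering its joint law with the ghost field, and thereby identify its ``reaches a ghost'' probability with $m_h(p)$. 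Once Lemma~\ref{lem:trou_sto} supplies this uniform bound, a telescoping over the $n$ exploration steps yields the stated inequality; Theorem~\ref{Thm:Sharp} then follows by iterating Theorem~\ref{thm:exp} along a suitable sequence $(p_k,h_k)$, exactly as in~\cite{Van23}.
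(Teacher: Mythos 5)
Your proposal follows essentially the same architecture as the paper's proof: establish a stochastic domination between $\Pro_s$ and the conditional measure $\Pro_{p,h}(\,\cdot\mid\calC_o\cap\calM=\emptyset)$ via an exploration that bounds a pivotality-type probability by $2m_h(p)$ (Lemma~\ref{lem:coupl_perco}, via Lemma~\ref{lem:coupl}), then extract the inequality by the Bayes-type calculation; the heavy lifting for the random-graph setting is delegated to Lemma~\ref{lem:trou_sto}, where the $q=1/2$ hypothesis enters through the Haar/gauge structure. You have correctly identified all of these ingredients.

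One imprecision in your sketch is worth correcting, because it touches the one genuinely new phenomenon of the random-graph setting. You attribute the factor of $2$ in $s=p(1-2m_h(p))$ to ``a union bound over the two endpoints of $e'$, each of which contributes a \emph{reaches a ghost} term of probability at most $m_h(p)$.'' This is not the right accounting: by the time the edge $e_{k+1}$ is examined, one of its endpoints, $v_{k+1}$, already lies in the explored cluster of $o$, and the conditioning on $\calC_o\cap\calM=\emptyset$ already forbids that vertex from reaching a ghost, so it contributes nothing. The factor of $2$ arises instead because the \emph{other} endpoint, $w_{k+1}$, is not fully determined by the exploration: we may know that $\pi(w_{k+1})=y$ while the $\eta$-status of $e_{k+1}$ is still incomplete, so $w_{k+1}$ could be either $y_0$ or $y_1$. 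The union bound is over those two candidate lifts, and it is precisely the possibility of exploring an edge's openness before its endpoint data (Remark~\ref{rem:crucial 1} / Remark~\ref{rem:crucial 2}) that makes the constant $2$ rather than $1$. This subtlety is part of why the Definition~\ref{defi:expl} distinguishes revealing $g(\xx_j)$ from revealing $f(\ee_j)$, so it should be made explicit rather than glossed over.
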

\begin{proof}[Proof of Theorem \ref{Thm:Sharp} using Theorem \ref{thm:exp}]
Let $p<p'$ be two positive numbers strictly smaller than $p_c$. Since we are in the subcritical regime with $p<p_c(1/2)$, the cluster of the origin is almost surely finite and therefore $m_h(p')\rightarrow 0 $ as $ h$ goes to $+\infty$. We can then find $h$ small enough such that $p'(1-2m_h(p'))>p$. For such a $p'$, we have by Theorem \ref{thm:exp} $$\psi_n(p)\leq \psi_n\big(p'(1-2m_h(p'))\big)\leq \frac{\psi_n(p')}{1-m_h(p')}e^{-hn}\leq \frac{1}{1-m_h(p')}e^{-hn}$$ which concludes our proof with $c=h$ and $C=\frac{1}{1-2m_h(p')}$.
\end{proof}
Although we prove the exponential decay only in the case $q=1/2$, we will keep the notation $G_q$ throughout this section, since most of our intermediate results still hold for $q\in[0,1]$. We will highlight the importance of $q=1/2$ when necessary.

\subsection{Random graphs and exploration}

Exploring a graph in percolation means looking at the state of different edges one by one, usually only following the connected component of a specific vertex. One problem that arises when exploring random graphs, is that in order to make a decision on which edge to explore next, we need to know which edge are in our realization of the random graph. In this context, an exploration will need to make decisions based on the openness of edges but also on the partial graph structure discovered previously. We need to make a precise decision on what information to reveal when exploring our graph. We need to distinguish between exploring a vertex and exploring an edge. One consequence is that we may explore an edge without knowing one of its endpoint. Our proof strategy will need a crucial property (that is Lemma \ref{lem:trou_sto}) that in transitive deterministic graph, is related to the BK-inequality : If one explores parts of a transitive graph $G$ then cuts this part of the graph, the result is a new graph where percolation is harder than in the original graph $G$. However, this property may not hold with percolation on random graphs, as exploring the cluster of a vertex also partially prescribe its geometry. Therefore, a particularly lucky geometry may favor percolation even if a part of the graph has been cut.

To account for this difficulty, we need to change our definition of a graph. Let $V$ and $E$ be two sets known as the vertex set and the edge set respectively. Let $P_2(V)$ denote the set composed of subsets of $V$ with $2$ elements. The graph structure is embedded in a function $f:E\to P_2(V)$ such that $f(e)=\{x,y\}$ if the edge $e$ links $x$ and $y$. Likewise, we also make use of a function $g:V\to P(E)$ such that $e\in g(x) $ iff $x\in f(e)$.  Given a function $f$, there is only one corresponding $g$ function (but a given $g$ function may not correspond to any $f$ function). Therefore, we will focus only on the function $f$ in our definitions. Note that whenever a function $f$ is defined, we consider the function $g$ to be implicitly defined as well. Note that this definition resembles Serre's definition of a graph.
 
In the case of our random graphs, the randomness of the graph structure purely lies in the $f$ and $g$ functions, while the edge and vertex set are fixed. Therefore, a random graph is defined as a triplet $\calH=(V,E,\nu)$ where $\nu$ is a probability measure on the set $\cal{F}$ of functions from $E$ to $P_2(V)$. A realization of the random graph $\calH$ is a certain function $f:\,E\to P_2(V) $. Throughout this section, $\nu$ will always denote the measure that controls the graph structure of $G_q$ but may refer to either the version with $\eta$ variables or with the functions $f$.

Let us see how we adapt our new definition to the case of $G_q$ constructed before from a transitive deterministic graph $G$. Let $e=\{x,y\}$ be an edge in $G$ and $\eta_e\sim \calB(q)$ its switching Bernoulli variable. Recall that $x_0,x_1,y_0$ and $y_1$ are the lifts of $x$ and $y$ in $G_q$. We consider the edges $e_0$ and $e_1$ as abstract objects such that 
\begin{itemize}
    \item If $ \eta_e=0$ then we choose one of the following cases with probability 1/2
$\left\{
\begin{array}{ll}
     f(e_0)=\{x_0,y_0\},\,f(e_1)=\{x_1,y_1\}\\
     f(e_1)=\{x_0,y_0\},\,f(e_0)=\{x_1,y_1\}
\end{array}
\right.$
    \item If $ \eta_e=1$ then we choose one of the following cases with probability 1/2 \quad
$\left\{
\begin{array}{ll}
     f(e_0)=\{x_0,y_1\},\,f(e_1)=\{x_1,y_0\}\\
     f(e_1)=\{x_0,y_1\},\,f(e_0)=\{x_1,y_0\}
\end{array}
\right.$
\end{itemize}
Informally, if $ \eta_e=0$, we know that $x_0$ et $y_0$ are going to be linked but determine randomly if the edge linking them is $e_1$ or $e_0$. The same idea holds for the vertices $x_1$ and $y_1$. In the same way, if $\eta_e=1$, we know that $x_0$ et $y_1$ are going to be linked but determine randomly if the edge linking them is $e_1$ or $e_0$. Conditionally to $\eta$, the choices are made independently. By considering every edge in $G$, we can fully describe $f$ and $g$ with the edge set as $E=\bigcup_{e\in E(G)}\{e_0,e_1\}$. 
\\

For a finite edge set $E$, we denote $\vec{E}$ the set of all orderings of $E$, that is all the elements $(e_1,...,e_{|E|})\in E^{|E|}$ such that each edge appear exactly once. 

\begin{definition}\label{defi:expl}
An \textbf{exploration} of $\calH$ is a map
\[
\begin{array}{rl}
\ee:\{0,1\}^{E}\times F& \longrightarrow V^{|E|}\times \vec{E}\\
\omega,f & \longmapsto (\xx_1,...,\xx_{|E|}),(\ee_1,\dots,\ee_{|E|})
\end{array}
\]
such that \begin{itemize}
    \item $\xx_1$ does not depend on $(\omega,f)$
    \item $\xx_k$ depends only on $\xx_1,\dots,\xx_{k-1}$ (possibly through $g$), $\ee_1,\dots,\ee_{k-1}$ (possibly through $f$) and $\omega_{\ee_1},\dots,\omega_{\ee_{k-1}}$
    \item $\ee_k$ depends only on $\xx_1,\dots,\xx_{k}$ (possibly through $g$), $\ee_1,\dots,\ee_{k-1}$ (possibly through $f$) and $\omega_{\ee_1},\dots,\omega_{\ee_{k-1}}$
\end{itemize}
\end{definition}
\begin{remark}
 Exploring in this context should be thought of as alternating between revealing properties of vertices (which edge it is connected to) and properties of edges (which are of two types:its endpoints and its openness status).
\end{remark}

Given an exploration $\ee$, $(e,x,\omega_0,f_0) \in \vec{E}\times V^{|E|} \times\{0,1\}^E\times \calF$ and $k \in \{0,\dots,|E|\}$, we denote by $\Expl_k(e,x,\omega_0,f_0)$ (or only $\Expl_k$) the event that $\ee$ on $(\omega,f)$ coincides with $(e,x)$ at least until step $k$, i.e.\
\begin{multline*}
\Expl_k(e,x,\omega_0,f_0)=\Big\{f,\omega \in \calF\times\{0,1\}^E:\forall j \in \{1,\dots,k\},g(\xx_j)=g_0(\xx_j)\\ \text{ and }\omega_{\ee_j}=({\omega_0})_{\ee_j}\Big\}
\end{multline*} 
We also let $\Expl(e,x,\omega_0,f_0)=\Expl_{|E|}(e,x,\omega_0,f_0)$.
\begin{remark}\label{rem:crucial 1}
    Note that the information contained within $\Expl_k$ breaks the symmetry between $f$ and $g$ as we only ask that the value of $g$ coincidence with $g_0$, the edges $e_i$ are partly undetermined. Depending on the chosen distribution for $f(e_j)$, the knowledge of $\Expl_k$ may not contain the knowledge of all the neighbors of $x_j$. This fact will become crucial later. 
\end{remark}
\medskip
\subsection{Preliminary lemmas}
In this subsection, $\calH=(V,E,\nu)$  
will always denote a finite random graph. As we need to consider both randomness from the graph structure and from percolation, we will say that an event $A\subset {\{0,1\}^E\times\calF}$ is \textbf{$p$-increasing} if for every $(\omega,f)\in A$ then $(\omega',f)\in A$ whenever $\omega\leq \omega'$. If $\mu$ and $\mu'$ are two probability measures on ${\{0,1\}^E\times\calF}$, we will say that $\mu'$ \textbf{$p$-dominates} $\mu$, written $\mu\preceq\mu'$, when $\mu(A)\leq \mu'(A)$ for all $p$-increasing events $A$. We fix $\ee$ an exploration of $\calH$ a random finite graph associated with a probability measure $\nu$.

\begin{lemma}\label{lem:gen}
Let $q \in [0,1]$ and $\calH=(V,E,\nu)$ be a finite random graph. Let $\mu$ be a probability measure on $\{0,1\}^E$. Write $ \mu^{\calH}=\mu\otimes\nu$ and assume that for every $(e,x,\omega_0,f_0) \in \vec{E}\times V^E\times\{0,1\}^E\times \calF$ such that $\mu^{\calH}\big[\Expl\big]>0$, we have
$$
\forall k \in \{0,\dots,|E|-1\}, \quad \mu^{\calH} \big[ \omega_{e_{k+1}} = 1 \; \big| \; \Expl_k \big] \geq p.\qquad (*)
$$
Then, $\Pro^{\calH}_p \preceq \mu^{\calH}$ where $\Pro^{\calH}_p=\Pro_p\otimes \nu$ and $\Pro_p$ denotes Bernoulli percolation with parameter $p$. 
\end{lemma}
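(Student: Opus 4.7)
The plan is to construct a monotone coupling of three processes $(\omega', \omega, f)$ on a common probability space such that $(\omega, f) \sim \mu^{\calH}$, $(\omega', f) \sim \Pro^{\calH}_p$, and $\omega'_e \leq \omega_e$ for every $e \in E$. Once such a coupling is produced, the conclusion follows at once: for any $p$-increasing event $A$, $\mathbb{1}_A(\omega', f) \leq \mathbb{1}_A(\omega, f)$, and taking expectations gives $\Pro^{\calH}_p(A) \leq \mu^{\calH}(A)$, which is the definition of $\Pro^{\calH}_p \preceq \mu^{\calH}$.

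I will build the coupling by running the exploration $\ee$ dynamically. The key design choice is to index the source of randomness by \emph{edges} rather than by \emph{steps}: take an i.i.d.\ family $(W_e)_{e \in E}$ of Uniform$(0,1)$ random variables, plus an independent source of auxiliary randomness used to realize $f$ from $\nu$. Starting from the deterministic vertex $\xx_1$, at each step $k \in \{1,\dots,|E|\}$ I reveal $g(\xx_k)$ from its correct $\nu$-conditional given the $f$-information already exposed, compute $\ee_k$ from the exploration rule, set $q_k := \mu^{\calH}[\omega_{\ee_k}=1 \mid \Expl_{k-1}]$ (which is $\geq p$ by hypothesis $(*)$), and define
\[
\omega_{\ee_k} := \mathbb{1}_{W_{\ee_k} \leq q_k}, \qquad \omega'_{\ee_k} := \mathbb{1}_{W_{\ee_k} \leq p}.
\]
Since $p \leq q_k$, this automatically forces $\omega'_{\ee_k} \leq \omega_{\ee_k}$. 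After $|E|$ steps every edge has been visited exactly once, so $\omega$ and $\omega'$ are determined on all of $E$; any remaining undetermined bits of $f$ are sampled from their $\nu$-conditional.

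Three verifications then remain. Pointwise domination $\omega' \leq \omega$ is built into the construction. That $(\omega, f) \sim \mu^{\calH}$ follows by the chain rule: at each step the next piece of $(\omega, f)$ is drawn from its correct $\mu^{\calH}$-conditional given the history, with the product structure $\mu \otimes \nu$ separating the $g$-sampling from the $\omega$-sampling and hypothesis $(*)$ providing exactly the right Bernoulli parameter $q_k$ for the $\omega$-bit.

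The step I expect to be the main obstacle, and the one that really motivates the edge-indexed uniform pool, is checking that $(\omega', f) \sim \Pro^{\calH}_p$. The naive worry is that the order in which the exploration visits edges is random and depends on $\omega$ (through the $W_e$'s), so a random permutation could a priori couple $\omega'$ to $f$ in a non-trivial way. The resolution is that, because each edge $e$ is explored exactly once, one has $\omega'_e = \mathbb{1}_{W_e \leq p}$ for \emph{every} $e \in E$, regardless of the step at which $e$ was visited. Thus $(\omega'_e)_{e \in E}$ is a deterministic function of $(W_e)_{e \in E}$ alone, so it is i.i.d.\ Bernoulli$(p)$ and independent of $f$, which is exactly what $\Pro^{\calH}_p = \Pro_p \otimes \nu$ requires. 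Had the uniform pool been indexed by exploration steps, this marginal identification would have been genuinely delicate; indexing by edges finesses the issue entirely and reduces the rest of the argument to routine conditional-probability bookkeeping.
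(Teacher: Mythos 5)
Your coupling argument is correct but genuinely different from the paper's, which proves the lemma by induction on $|E|$: the paper conditions on $g(x)=a_i$, then on $\omega_{\ee_1}\in\{0,1\}$, applies the inductive hypothesis to the graph with $\ee_1$ removed, and uses $A^{0,i}\subset A^{1,i}$ together with $\mu_i(\omega_{\ee_1}=1)\geq p$ to collapse the two branches into $\Pro_p(A)$. You instead run the exploration once and read off stochastic domination from the pointwise inequality $\omega'\leq\omega$. The choice you emphasize, indexing the uniforms $(W_e)_e$ by edges rather than by steps, is indeed what makes the marginal of $(\omega',f)$ routine: the formula $\omega'_e=\mathbb{1}_{W_e\leq p}$ then holds for every $e$ independently of the (random) order of visits, so $\omega'$ is a measurable function of $W$ alone and hence iid Bernoulli$(p)$; and $f\perp W$ follows because, conditionally on any fixed $W$, pieces of $f$ are revealed one at a time from their correct $\nu$-conditionals, with the next piece to reveal determined only by the already-revealed history and never by unrevealed bits of $f$, so $\mathcal{L}(f\mid W)=\nu$ identically in $W$. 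That last step is the one place where the random-order worry truly needs dispelling and would benefit from an explicit sentence. Overall, your coupling is more constructive and makes the mechanism visible; the paper's inductive decomposition is shorter and sidesteps the bookkeeping of verifying three marginal laws.
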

To keep things readable, we drop the superscript $\calH$ for our measure $\mu$. Therefore, $\mu$ indiscriminately means a probability measure on $\{0,1\}^E$ and its product by the random graph measure. 
\begin{proof}[Proof of Lemma \ref{lem:gen}]
We prove the lemma by induction on $|E|$. If there are no edges in $\calH$, the result is immediate.
When $\calH$ has at least one edge, let $A\subset \{0,1\}^E\times F $ be a $p$-increasing event. We fix a vertex $x$ and let $(a_i)_{i\in I}$ describe all the possible values for $g(x)$. We can then write 
\begin{align}
     \mu(A)=&\sum\limits_{i\in I}\mu(g(x)=a_i) \mu_i(A) \label{eq:probtot}\\
     =& \sum\limits_{i\in I}\mu(g(x)=a_i) \Big[\mu_i(A|\omega_{e_1}=1)\mu_i(\omega_{e_1}=1) + \mu_i(A|\omega_{e_1}=0)\mu_i(\omega_{e_1}=0)\Big].\nonumber
\end{align}
where $\mu_i=\mu(\,\cdot\,|\,g(x)=a_i)$.

The knowledge of $g(x)$ and $\omega_{e_1}$ is the knowledge of $\Expl_1$ so the hypothesis $(*)$ carries over to 
$\mu_i(\,\cdot\,|\,\omega_{e_{1}}=\varepsilon)$. 
Also, $\mu_i(\,\cdot\,|\,\omega_{e_1}=\varepsilon)$ is still a product measure between a percolation on $\{0,1\}^{E\setminus\{e_1\}}$ and the graph measure on $G$.
Therefore, let 
\begin{multline*}
A^{\varepsilon,i}=\Big\{\omega^\star,f^\star\in \{0,1\}^{E\backslash \{e_1\}}\times\calF\,|\,g^\star(x)=a_i\text{ and }\forall \omega'\in \{0,1\}^E\\\,\big((\omega'_{e_1}=\varepsilon\text{ and }\omega'=\omega^\star\text{ outside of }e_1)\Rightarrow (\omega',f^\star)\in A\big)\Big\}.
\end{multline*}
By construction, we have   $\mu_i(A|\omega_{e_1}=\varepsilon)=\mu_i(A^{\varepsilon,i}|\omega_{e_1}=\varepsilon)$. Let us write $\PR_{p,a_i}=\Pro_p(\,\cdot\,|g(x)=a_i)$ so by the induction hypothesis we have 
\begin{multline}\label{eq:ineq1}
\mu_i(A|\omega_{e_1}=1)\mu_i(\omega_{e_1}=1) + \mu_i(A|\omega_{e_1}=0)\mu_i(\omega_{e_1}=0)\\ 
\geq \Pro_{p,a_i}(A^{1,i})\mu_i(\omega_{e_1}=1) + \Pro_{p,a_i}(A^{0,i})\mu_i(\omega_{e_1}=0).
\end{multline}
The event $A$ is $p$-increasing so $A^{\varepsilon, i}$ is also $p$-increasing. We have $A^{0,i}\subset A^{1,i}$ and as a result $\Pro_{p,a_i}(A^{0,i})\leq \Pro_{p,a_i}(A^{1,i})$.  Because $g(x)$ is independent of the $\omega_e$ variables, we have $\mu_i(\omega_{e_1}=1)=\mu(\omega_{e_1}=1)\geq p$ so we can write:
\begin{multline}\label{eq:ineq2}
\Pro_{p,a_i}(A^{1,i})\mu_i(\omega_{e_1}=1) + \Pro_{p,a_i}(A^{0,i})\mu_i(\omega_{e_1}=0)\\\geq \Pro_{p,a_i}(A^{1,i})p + \Pro_{p,a_i}(A^{0,i})(1-p)=\Pro_p(A\,|\,g(x)=a_i).
\end{multline}
which concludes our proof as plugging (\ref{eq:ineq1}) and (\ref{eq:ineq2}) in (\ref{eq:probtot}) gives us
$$\mu(A)\geq\sum_{i\in I}\mu(g(x)=a_i)\,\Pro_p(A\,|\,g(x)=a_i)=\Pro_p(A).$$
\end{proof}
We now add a new layer of randomness to our model. For $h\in [0,1]$, we denote by $\sigma_h$ the product of Bernoulli measures with parameter $h$ on every vertex in $\calH$. If $\calM\subset V$ is a random subset given by $\sigma_h$, we will call the elements of $\calM$ the green vertices. We now consider the product measure $\Pro_{p,h}=\Pro^{\calH}_p\otimes\sigma_h$. Note that for the next lemma, the properties of the measure $\sigma_h$ are not  important. 

Recall that an edge $e$ is called \textbf{$p$-pivotal} in a configuration $(\omega,f,S)$ for an event $A\subset \{0,1\}^E \times \calF\times \{0,1\}^V$ if changing  the value of $\omega_e$ changes whether $(\omega,f,S)$ belongs to $A$. 
\begin{lemma}\label{lem:coupl}
Let $\calH$ be a finite random graph and let $p \in (0,1)$ and $h \in (0,+\infty)$. Moreover, let $A \subseteq \{0,1\}^E \times \calF\times \{0,1\}^V$ be any non-empty set and let $\varepsilon \in [0,1]$. Assume that for every $(e,x,\omega_0,f_0) \in \vec{E}\times V^E\times\{0,1\}^E\times \calF$  such that $A \cap \Expl\neq \emptyset$, we have
\[
\forall k \in \{0,\dots,|E|-1\}, \quad \Pro_{p,h} \big( \text{$e_{k+1}$ is pivotal for $A$} \; \big| \; A \cap \Expl_k\big) \le \varepsilon.
\]
Then,
\[
\Pro_{p(1-\varepsilon)}\preceq \Pro_{p,h} \big( (\omega,f) \in \cdot \; \big| \; A \big).
\]
\end{lemma}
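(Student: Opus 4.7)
The plan is to apply (a slight generalization of) Lemma~\ref{lem:gen} at effective parameter $p(1-\varepsilon)$ to the conditional joint measure $\mu := \Pro_{p,h}((\omega,f)\in\cdot\mid A)$. The task reduces to verifying the one-step conditional bound: for every admissible exploration data with $\mu(\Expl) > 0$ and every $k\in\{0,\dots,|E|-1\}$,
$$\mu(\omega_{e_{k+1}} = 1 \mid \Expl_k) = \Pro_{p,h}(\omega_{e_{k+1}} = 1 \mid \Expl_k \cap A) \geq p(1-\varepsilon).$$

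To establish this, write $e = e_{k+1}$ and use that $e \notin \{\ee_1, \ldots, \ee_k\}$ together with the independence structure of $\Pro_{p,h}$: conditionally on $\Expl_k$, the variable $\omega_e$ is still Bernoulli$(p)$ and independent of everything else. Next, introduce, for $i \in \{0,1\}$, the events
$B_i := \{(\omega',f',S'): \text{the configuration obtained by forcing } \omega'_e = i \text{ lies in } A\}$,
both measurable with respect to everything except $\omega_e$; the edge $e$ is pivotal for $A$ exactly on $B_0 \triangle B_1$. Setting $a_i = \Pro_{p,h}(B_i \mid \Expl_k)$ and $b_{ij} = \Pro_{p,h}(B_i \setminus B_j \mid \Expl_k)$, a direct expansion yields
\begin{align*}
\Pro_{p,h}(A \mid \Expl_k) &= (1-p)a_0 + p\,a_1, \\
\Pro_{p,h}(A \cap \{\omega_e=1\} \mid \Expl_k) &= p\,a_1, \\
\Pro_{p,h}(A \cap \{e \text{ pivotal}\} \mid \Expl_k) &= (1-p)b_{01} + p\,b_{10}.
\end{align*}

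The hypothesis then reads $(1-p)b_{01} + p\,b_{10} \leq \varepsilon\big[(1-p)a_0 + p\,a_1\big]$, while the target rearranges into $(1-p)(a_0 - a_1) \leq \varepsilon\big[(1-p)a_0 + p\,a_1\big]$. The case $a_0 \leq a_1$ is immediate; in the case $a_0 > a_1$, using $a_0 - a_1 = b_{01} - b_{10}$ together with $p, b_{10} \geq 0$, we bound
$(1-p)(a_0 - a_1) = (1-p)b_{01} - (1-p)b_{10} \leq (1-p)b_{01} + p\,b_{10}$,
and conclude by invoking the hypothesis.

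The main obstacle, beyond this algebraic bookkeeping, is that $\mu$ is not of the product form $\mu_0 \otimes \nu$ assumed in Lemma~\ref{lem:gen}, since conditioning on $A$ generically couples $\omega$ with $f$. The inductive argument behind Lemma~\ref{lem:gen} nevertheless uses that product structure only to propagate the step-by-step conditional bound through successive reductions of the edge set; because our bound $\mu(\omega_{e_{k+1}} = 1 \mid \Expl_k) \geq p(1-\varepsilon)$ was established uniformly over admissible exploration data---purely as a consequence of the pivotality hypothesis at that step---the induction carries over verbatim. The cleanest route is to first state and prove a version of Lemma~\ref{lem:gen} for arbitrary joint measures on $\{0,1\}^E \times \calF$ satisfying the step-by-step bound, and then to plug $\mu$ into this version to conclude $\Pro_{p(1-\varepsilon)} \preceq \mu$.
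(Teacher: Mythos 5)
Your proposal follows the same overall route as the paper: reduce to Lemma~\ref{lem:gen} applied to the conditioned measure $\mu=\Pro_{p,h}\bigl[(\omega,f)\in\cdot\mid A\bigr]$ at effective parameter $p(1-\varepsilon)$, so that everything hinges on the one-step bound $\Pro_{p,h}\bigl(\omega_{e_{k+1}}=1\mid A\cap\Expl_k\bigr)\geq p(1-\varepsilon)$. Your verification of that bound, via the events $B_0,B_1$ and the quantities $a_i,b_{ij}$, is correct, but is an expanded and case-split version of the paper's more compact calculation, which writes
\[
\Pro_{p,h}\bigl(\omega_{e_{k+1}}=1\mid A\cap\Expl_k\bigr)\geq \frac{\Pro_{p,h}\bigl(\{\omega_{e_{k+1}}=1\}\cap A\cap\{e_{k+1}\text{ not piv.}\}\mid\Expl_k\bigr)}{\Pro_{p,h}(A\mid\Expl_k)}=p\,\Pro_{p,h}\bigl(e_{k+1}\text{ not piv.}\mid A\cap\Expl_k\bigr)\geq p(1-\varepsilon),
\]
using that $A\cap\{e_{k+1}\text{ not piv.}\}$ is exactly your $B_0\cap B_1$ and is $\omega_{e_{k+1}}$-independent; in your notation this replaces the sign split on $a_0-a_1$ with the single observation that intersecting with $\{\text{not piv.}\}$ can only lose mass. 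Your remark that $\mu$ is not of the product form $\mu_0\otimes\nu$ required by Lemma~\ref{lem:gen} is a genuine and worthwhile observation: the paper applies Lemma~\ref{lem:gen} here while its earlier notational remark only covers the case where $\mu$ on $\{0,1\}^E\times\calF$ is a product with the graph measure, which conditioning on $A$ destroys. Your instinct to re-prove Lemma~\ref{lem:gen} for a general joint measure is the right one, but the straightforward generalization delivers $\Pro_{p(1-\varepsilon)}\otimes\nu_\mu\preceq\mu$ with $\nu_\mu$ the $\calF$-marginal of $\mu$ (not $\nu$ itself), since $p$-increasing events include all $f$-measurable events and their complements; checking that this weaker conclusion still feeds correctly into Lemma~\ref{lem:coupl_perco}, or that the marginals actually coincide, is a point that neither your proposal nor the paper spells out.
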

Note that the last line of Lemma \ref{lem:coupl} is a \textbf{$p$-domination} for probability measures on $\{0,1\}^E \times \calF$.  Since we have extended our probability space again, we should be using $\Expl\times \{0,1\}^V$ instead of $\Expl$ but we omit it for readability reasons. 

\begin{proof}[Proof of Lemma \ref{lem:coupl}]
To prove stochastic domination, we use the Lemma \ref{lem:gen} with $\mu=\Pro_{p,h}\big[ (\omega,f) \in \cdot \; \big| \; A \big]$. We now therefore need to bound the quantity $\Pro_{p,h}(\omega_{e_{k+1}}=1|A\cap \Expl_k)$ from below. 

\begin{multline*}
     \Pro_{p,h}(\omega_{e_{k+1}}=1|A\cap \Expl_k)=\frac{\Pro_{p,h}\big((\omega_{e_{k+1}}=1)\cap A|\Expl_k\big)}{\Pro_{p,h}(A|\Expl_k)}  \\
      \geq \frac{\Pro_{p,h}\big((\omega_{e_{k+1}}=1)\cap A\cap (e_{k+1}\text{ is not piv. for }A)\big|\Expl_k)}{\Pro_{p,h}(A|\Expl_k\big)}\\
     =p\times \Pro_{p,h}(e_{k+1}\text{ is not piv. for }A|A\cap\Expl_k)\geq p(1-\varepsilon).
\end{multline*}

The last equality stems from the fact that when $e_{k+1}$ is not pivotal for $A$, $\omega_{e_{k+1}}$ is independent of the $sigma$-field generated by $A$ and the event $\Expl_K$ (as is the event “$\omega_{e_{k+1}}$ is not pivotal for $A$”). We therefore conclude with Lemma \ref{lem:gen} with the parameter $p(1-\varepsilon)$. 
\end{proof}

\subsection{ Remaining graph}
We now go back to our original random $G_q$ created by switching pairs of edges in two copies of $G$. However, since we have only considered finite graphs in the previous section, we restrict our view to a subgraph $G^n_q$. We define $G_q^n$ as the subgraph of $G_q$ induced by the set of vertices $\pi^{-1}(V_n)$, where $V_n$ is the set of vertices at distance at most $n$ from the origin in $G$. Alternatively, $G^n_q$ can be defined as $(G^n)_q$ by taking the ball of radius $n$ around the origin in $G$ and creating its random 2-lift $G^n_q$.

\begin{lemma}\label{lem:coupl_perco}
Let $p \in (0,1)$ and $h \in (0,1)$, and write $s=p(1-2m_h(p))$. Then,
\[
\forall n \ge 0, \quad \Pro^{G^n_q}_{s} \preceq \Pro^{G^n_q}_{p,h} \big( (\omega,f) \in \cdot \; \big| \; \calC_o \cap \calM = \emptyset \big).
\]
\end{lemma}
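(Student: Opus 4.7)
The plan is to apply Lemma \ref{lem:coupl} to the event $A = \{\calC_o \cap \calM = \emptyset\}$ with $\varepsilon = 2m_h(p)$. Since $p(1-\varepsilon) = p(1-2m_h(p)) = s$ by definition, the conclusion of the lemma would then directly yield the stated stochastic domination. All the work therefore lies in constructing a suitable exploration $\ee$ of $G_q^n$ and verifying the pivotality hypothesis
$$\Pro_{p,h}^{G_q^n}\bigl( e_{k+1} \text{ pivotal for } A \,\bigl|\, A\cap \Expl_k\bigr) \le 2m_h(p).$$

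First, I would design $\ee$ to be a standard cluster-exploration of $o$ adapted to Definition~\ref{defi:expl}: at each step $\xx_k$ is either the next vertex on the frontier of the cluster revealed so far (whose incidence structure $g(\xx_k)$ must still be exposed) or, once the cluster has been exhausted, any remaining unexplored vertex; and $\ee_k$ is chosen to be a still-unexplored edge incident to the currently explored cluster (or arbitrary after exhaustion). By revealing $g(\xx_k)$ and then $\omega_{\ee_k}$ in turn, this exploration determines the cluster of $o$ after finitely many steps and so determines whether $A$ holds.

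Second, I would verify the pivotality bound. Given $A \cap \Expl_k$, an edge $e_{k+1}$ can be pivotal for $A$ only if one endpoint $u$ of $e_{k+1}$ lies in the currently revealed cluster of $o$ while the other endpoint $v$ is as yet unexplored and, in addition, the cluster of $v$ in the remaining (unexplored) part of $G_q^n$ meets $\calM$. The observation of Remark~\ref{rem:crucial 1} is relevant here: the exploration has not in general committed $f(\ee_{k+1})$, so the identity of $v$ is still random among (at most) the two lifts of $\pi(v) \in V(G)$. A union bound over these two possibilities reduces the pivotality bound to the claim: for each candidate endpoint $v$, the conditional probability that its cluster in the unexplored region reaches $\calM$ is at most $m_h(p)$. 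This latter comparison is precisely the content of Lemma~\ref{lem:trou_sto}, which provides a stochastic domination of the unexplored part of $G_q^n$ (given the partial exploration) by a fresh copy of $G_q$ and is the only place where the hypothesis $q=1/2$ is invoked.

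The main obstacle is the pivotality step, i.e.\ combining the partial information revealed by $\Expl_k$ (which is asymmetric between $f$ and $g$) with Lemma~\ref{lem:trou_sto} to obtain a clean comparison. One must be careful to verify that the event $A$, which constrains the green-status of all revealed vertices of $\calC_o$ to be off, does not interfere with the domination argument — specifically, that conditioning on $A \cap \Expl_k$ leaves the law of greenness on the unexplored vertices unchanged (they are independent) and only refines the law of the unexplored graph structure in a way controlled by Lemma~\ref{lem:trou_sto}. Once this pivotality estimate is in hand, the rest is an immediate application of Lemma~\ref{lem:coupl}.
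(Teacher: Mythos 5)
Your plan is the same as the paper's: apply Lemma~\ref{lem:coupl} with $A=\{\calC_o\cap\calM=\emptyset\}$ and $\varepsilon=2m_h(p)$, use a cluster exploration of $o$, and derive the pivotality bound from Lemma~\ref{lem:trou_sto} plus a union bound over the two possible lifts of the unseen endpoint. That skeleton is correct and is exactly what the paper does.

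There is, however, a genuine gap in how you invoke Lemma~\ref{lem:trou_sto}. You argue that $e_{k+1}$ is pivotal only if the unexplored endpoint $v$ reaches $\calM$ ``in the remaining (unexplored) part of $G_q^n$,'' and you then cite Lemma~\ref{lem:trou_sto} to bound this by $m_h(p)$. But at step $k$ the cluster $\calC_o$ is not yet fully revealed, so the ``remaining graph after $\calC_o$'' is not a well-defined deterministic object under $\Expl_k$; Lemma~\ref{lem:trou_sto} is stated for a \emph{fixed} $\calC_o$ together with a fixed incidence function $g_0$. Moreover, for pivotality one needs $v$ to reach $\calM$ avoiding the \emph{entire} eventual $\calC_o$, not just the part revealed up to step $k$. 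The paper resolves this by introducing a refined exploration $\Expl'_k$ that postpones $e_{k+1}$ and keeps exploring until $\calC_o$ is exhausted, and then uses the tower property $\widetilde{\Pro_p}(e_{k+1}\text{ piv.}\,|\,\Expl_k)=\widetilde{\E_p}\big[\Pro_p(e_{k+1}\text{ piv.}\,|\,\Expl'_k)\,|\,\Expl_k\big]$ so that, conditionally on $\Expl'_k$ (and $e_{k+1}$ being closed), the set $\calC_o$ and its boundary function are fully determined and Lemma~\ref{lem:trou_sto} applies verbatim. Without this bridging step (or an equivalent decomposition over all possible completions of $\calC_o$), the inequality you want does not follow directly from Lemma~\ref{lem:trou_sto}. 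Your side remark about greenness independence is fine but does not address this issue.
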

Theorem \ref{thm:exp} is a direct consequence of this lemma, as proven by the following calculation. 
\begin{proof}[Proof of Theorem \ref{thm:exp} using Lemma \ref{lem:coupl_perco}]
Recall that $\psi_n(s)=\Pro_{s}(|\calC_o|\geq n])=\Pro_{s}^{G_n}(|\calC_o|\geq n)$ , we have by Lemma \ref{lem:coupl_perco}
\begin{align*}
\psi_n(s)\le\Pro_{p,h}^{G_n} \big( |\calC_o|\ge n  \big|  \calC_o \cap \calM = \emptyset
 \big) &\overset{\text{Bayes}}{=} \frac{\psi_n(p) \times \Pro_{p,h}^{G_n}\big(\calC_o \cap \calM = \emptyset \; \big| \; |\calC_o| \ge n \big)}{1-\Pro_{p,h}^{G_n}\big( \calC_o \cap \calM \neq \emptyset \big)}\\
& \hspace{0.2cm}=  \frac{\psi_n(p) \times \E_{p,h}^{G_n}\big[ e^{-h|\calC_o|} \; \big| \; |\calC_o| \ge n \big]}{1-\Pro_{p,h}^{G_n}\big( \calC_o \cap \calM \neq \emptyset \big)}\\
& \hspace{0.2cm}\le \frac{\psi_n(p)}{1-m_h(p)} e^{-hn}
\end{align*} 
\end{proof}

Proving Lemma  \ref{lem:coupl_perco} will require us to introduce the notion of \textbf{remaining graph} defined below.
\begin{definition}
    Given a random graph $\calH=(V,E,\nu)$, a connected set of vertices $\calC_o\subset V$ and a function $g_o:\calC_o\to P(E)$, we denote by $E_0\subset E$, the set of edges with at least one endpoint in $\calC_o$ according to $g_o$ and we call $$\widetilde{\calH}=\Big(V\setminus \calC_o,E\setminus E_o,\nu\big(\,\cdot\,|\,\forall x\in\calC_o,g(x)=g_o(x)\,\big)\Big)$$ the remaining graph after $\calC_o$.
\end{definition}
Note that while $\calC_o$ is deterministic in our definition, it can be thought of as a random cluster of the origin. We will write $\widetilde{\pi}$ for the projection from $\widetilde{G_q}$ to $G$. The need for the notion of remaining graph will appear in the proof  of Lemma \ref{lem:coupl_perco}. It will boil down to the following inequality. 

\begin{lemma}\label{lem:trou_sto}
   Let $G$ be an infinite transitive graph. 
    Let $\calC_o$ be any set of vertices of $G_q$ that contains $o$ et and function $g_0:\calC_o\to P(E)$ that makes $\calC_o$ connected. Let us denote $\widetilde{\calC_x}$, the cluster of the vertex $x$ in $\widetilde{G_q}$. 
    For $q=1/2$ and any $p<p_c(1/2)$, we have almost surely for the remaining graph $\widetilde{G_q}$ the inequality:$$\Pro_p^{\widetilde{G_q}}(\widetilde{\calC_x}\cap\calM\neq\emptyset)\leq \Pro_p^{G_q}(\calC_x\cap\calM\neq\emptyset)$$ for any vertex $x$ outside of $\calC_o$. 
\end{lemma}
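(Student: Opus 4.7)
The plan is to construct a coupling between the conditional law $\nu(\cdot\,|\,g_0)\otimes \Pro_p\otimes\sigma_h$ of $(\widetilde{G_q},\omega,\calM)$ and the unconditional law $\nu\otimes\Pro_p\otimes\sigma_h$ of a fresh $(G_q,\omega',\calM')$ under which $\widetilde{\calC_x}\cap\calM\subseteq \calC_x\cap\calM'$ almost surely; the lemma will then follow by taking expectations. In the deterministic setting of \cite{Van23}, such a coupling is trivial because $G\setminus\calC_o$ is literally a subgraph of $G$; the delicacy in our random cover is that the conditional law of the rest of $G_q$ given $g_0$ is not simply the restriction of the fresh law to $V(G_q)\setminus\calC_o$.

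The first step is to analyze the conditional law of $\widetilde{G_q}$ by classifying each base edge $e=\{u,v\}\in E(G)$ according to how many lifts of $u,v$ lie in $\calC_o$. If neither endpoint has a lift in $\calC_o$, all four lift configurations remain uniform. If exactly one of $u,v$ has a single lift in $\calC_o$, the conditioning fixes the abstract label of one lift edge but leaves $\eta_e$ uniform, so the retained lift edge is distributed identically to the corresponding edge in fresh $G_q$ restricted to $V(G_q)\setminus\calC_o$ and an identity coupling suffices on it. The delicate case is when both endpoints have exactly one lift in $\calC_o$: here $g_0$ completely determines $\eta_e$, so $\widetilde{G_q}$ contains a \emph{deterministic} edge between the two outside lifts in the parallel case and no edge in the cross case, whereas fresh $G_q$ realizes each possibility with probability $1/2$.

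The hypothesis $q=1/2$ will be invoked to cure this asymmetry via gauge invariance. At $q=1/2$ the measure $\nu$ is invariant under every gauge transformation $\phi_S$, and applying $\phi_S$ flips $\eta_e$ on exactly the edges with one endpoint in $S$. Given a fresh sample of $G_q$, I will therefore choose a (random) gauge $\phi_S$ so that $\phi_S(G_q)$ agrees with $g_0$ at every doubly-boundary edge; by gauge invariance $\phi_S(G_q)$ still has the fresh law, and by construction every edge of $\widetilde{G_q}$ then appears as an edge of $\phi_S(G_q)$ when the latter is restricted to $V(G_q)\setminus\calC_o$. Transporting $\omega'$ and $\calM'$ back through $\phi_S$ completes the coupling, and the subgraph relation yields $\widetilde{\calC_x}\subseteq \calC_x$ vertexwise.

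The main obstacle will be combinatorial: aligning the matchings of all doubly-boundary edges simultaneously imposes parity constraints on $S$ along every cycle of $G$ passing through such edges, and a single $S$ cannot in general meet all of them. I anticipate doing the coupling inductively along an exploration of $\calC_o$ in the sense of Definition~\ref{defi:expl}, choosing the order in which bits are revealed so that no contradictory parity ever arises -- equivalently, averaging the gauge over the cut space of $G$ and exploiting that under $\nu$ at $q=1/2$ the parity profile of fresh $G_q$ along cycles is itself uniform. This is where the refined exploration of Section~\ref{Sec:Sharp} pays off, and it is precisely why $q=1/2$, which makes $\nu$ invariant under the \emph{full} cut space of gauges, is indispensable.
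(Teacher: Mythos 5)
Your edge classification and your instinct to exploit gauge invariance at $q=1/2$ are both on target, but the coupling you propose has a gap you notice yet do not close. You want a gauge $\phi_S$ making a fresh $\phi_S(G_q)$ agree with $g_0$ on every type-(2) edge, and you observe this forces parity constraints along cycles. This obstruction is not a nuisance you can avoid by reordering the exploration: the quantity $\sum_{e\in C}\eta_e \pmod 2$ around a base cycle $C$ is gauge-\emph{invariant}, since $\phi_S$ shifts $\eta_e$ by the coboundary $\mathbb{1}_S(u)+\mathbb{1}_S(v)$, which sums to zero along any cycle. So if $\pi(\calC_o)$ contains a cycle, the parity that $g_0$ prescribes disagrees with that of an independent fresh sample with probability $1/2$, and no choice of $S$ --- random, deterministic, or adaptive --- can reconcile them. ``Averaging over the cut space'' does not help because the cycle space is the orthogonal complement of the cut space, so the mismatch lives exactly in the part gauges cannot touch. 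Relatedly, your target of a vertexwise inclusion $\widetilde{\calC_x}\subseteq\calC_x$ is stronger than what actually holds: the paper explicitly remarks at the end of its proof that the coupling yields only an injection of open edges with distorted geometry (it ``breaks'' cycles), not a subgraph inclusion.

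The paper sidesteps the obstruction by \emph{not} trying to match the type-(2) edges at all. It resamples their $f^\star$-status freshly, then explores $\calH=(\calC_o^\star\cup\partial_V\calC_o^\star,\,E_1\cup E_2)$ to build an association $a$ sending vertices and open edges of the boundary region to lifts in the fresh $G_q$. This $a$ acts as a local gauge only at the type-(1) boundary edges: writing $f(e)=\{x_\varepsilon,y_j\}$ and $f^\star(a(e))=\{x_{H(x)},y_j\}$ gives $\eta_e+\varepsilon=\eta_e^\star+H(x)$ in $\Z/2\Z$, where the shift $H(x)$ depends only on the freshly resampled type-(2) data along a rooted path to $x$ and is therefore independent of $\eta_e$. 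At $q=1/2$, Bernoulli$(1/2)$ is Haar measure on $\Z/2\Z$, so $\eta_e^\star$ is again Bernoulli$(1/2)$ and independent of the type-(2) data; no cycle constraint ever arises because nothing about the type-(2) edges is required to match $g_0$. The injection of open edges through $a$ (identity on type (0), $a$ on types (1), (2)) then yields $\Pro_p^{\widetilde{G_q}}(\widetilde{\calC_x}\cap\calM\neq\emptyset)\leq \Pro_p^{G_q}(\calC_x\cap\calM\neq\emptyset)$ without any subgraph relation. To salvage your plan, you would have to abandon exact matching on the type-(2) edges and build this kind of boundary-only gauge instead.
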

We will prove this lemma by coupling percolation on $G_q$ and on $\widetilde{G_q}$. The result will only be a stochastic domination of the size of clusters (and not directly of clusters) as we may ``break'' the geometry of $\widetilde{G_q}$ while constructing our coupling.

To make sense of this lemma, let us look at the equivalent property on a non-random transitive graph. Here, the remaining graph after exploration is just the graph $G$ where we deleted the connected component of the origin. On this subgraph $\widetilde{G}$, the cluster of any vertex is necessarily stochastically smaller in $\widetilde{G}$ than $G$ by transitivity. It is therefore harder to reach a green vertex from there. 
The main issue that arises when dealing with random graphs is that our exploration of $\calC_0$ not only discover the openness of edges in (and on the border of) $\calC_o$ but also partially prescribe the geometry of the graph. We could then imagine that despite the deletion of edges in $\calC_o$, an extremely unlucky geometry would favor exploration from a given vertex violating the inequality of Lemma \ref{lem:trou_sto}. What Lemma~\ref{lem:trou_sto} ensures is that if we take $q=1/2$, this latter case does not occur.

\begin{proof}[Proof of Lemma \ref{lem:coupl_perco} given Lemma~\ref{lem:trou_sto}:]
We will apply Lemma \ref{lem:coupl} with $\varepsilon=2m_h(p)$ and $A=(\calC_o\cap\calM = \emptyset)$. Let us fix an arbitrary well-ordering of the vertices and the edges of $G^n_q$. 
 We define an exploration of $G^n_q$ in the following way:We start by revealing $\calC_0$, the cluster of the origin $o$.
More precisely, we start by revealing the edges connected to the origin. Then we start applying the following rule:
 We pick the smallest edge $e$ yet to be explored and connected to our cluster and reveal its state. If it is closed, we reveal the neighbors of the smallest vertex already explored. 
 If it is open, we first reveal $f(e)$ to know both endpoints of $e$. Let $y$ be the endpoint of $e$ that is not yet in the explored connected component (or take the smallest endpoint if both were already explored). Then we reveal the neighboring edges of $y$. We repeat the process until we have explored all of $\calC_o$. Once we are done exploring the cluster of the origin, we reveal the status of the remaining vertices and edges following our ordering. 
\begin{remark}\label{rem:crucial 2}
One important aspect of this exploration we want to highlight is the fact that at  each step, we are potentially exploring edges without knowing one of its endpoints. In general, the only way to know  for sure in a random graph $\calH$ which vertices the edge $e$ connect is to find, during exploration, both of its endpoints. However, in the context of our $G_q$, and since the value $f$, on both lifts of $e$, are heavily coupled variables, knowing the endpoints of the edge $e_i$ means exploring at least one lift of both endpoints of $e$. 

\end{remark} 
With this exploration in mind, we want to get an estimate to the quantity $$\Pro_p\Big(e_{k+1}\text{ piv. for } (\calC_0\cap \calM = \emptyset)\Big|(\calC_0\cap \calM = \emptyset)\cap \Expl_k\Big).$$ Henceforth, let us write $(e_{k+1}\text{ piv.})$ for $\big(e_{k+1}\text{ piv. for } (\calC_0\cap \calM = \emptyset)\big)$.

Now, assume we follow a different exploration where we skip revealing the state of $e_{k+1}$ to continue exploring $\calC_o$ until we are forced to explore $e_{k+1}$. We call $\Expl'_k$ the random variable that represents the information contained in $\Expl_j$ where $j$ is the step we are forced to explore $e_{k+1}$ i.e. 
$$\Expl'_k=\Big(\omega,f\mapsto (g(\xx_1),\dots,g(\xx_j)),(\omega_{\ee_1},\dots,\omega_{\ee_{j-1}})\Big)$$ 
where $j$ is such that $\ee'_j=e_{k+1}$ in our new exploration. Let us label $e'_1,\dots e'_j=\ee_1',\dots,\ee'_j$.
Let us also define $\widetilde{\Pro_p}=\Pro^{G_q^n}_p\left(\;\cdot\;\big|\calC_0\cap \calM = \emptyset\right)$ and $\widetilde{\E_p}$ the corresponding expectation operator. 
As $\Expl'_k$ is the result of an exploration that has moved further than $\Expl_k$, it contains more information and because of the inclusion of the corresponding $sigma$-field, we can write $$\widetilde{\Pro_p}(e_{k+1}\text{ piv.} \big|\Expl_k)=\widetilde{\E_p}\left[\Pro^{G_q^n}_p(e_{k+1}\text{ piv.}\big|\Expl_k')\big|\Expl_k\right]$$

We now want to pay close attention to the quantity $\Pro^{G_q^n}_p[e_{k+1}\text{ piv.}\big|\Expl_k']$. Note that when $\Expl_k$ and $(\calC_o\cap\calM = \emptyset)$ hold, the event \big($e_{k+1}$ is pivotal for $(\calC_o\cap\calM = \emptyset)$\big)  implies that $e_{k+1}$ is closed, but one of its endpoint is connected to the origin and the other to a green vertex (such that if $e_{k+1}$ were to be open, the origin would be connected to a green vertex through $e_{k+1}$). We can therefore write $e_{k+1}=\{v_{k+1},w_{k+1}\}$ where:
\begin{itemize}
    \item $v_{k+1}$ is connected to the origin by open edges in $\{e_1,\dots,e_k\}$,
    \item $w_{k+1}$ is connected to a green vertex by open edges that do not belong in $\{e'_1,\dots,e'_{j}\}$,
    \item $e_{k+1}$ is closed,
\end{itemize}
which is a decomposition of the event $\big(e_{k+1}\text{ piv.})$ when $\Expl_k$ and $(\calC_o\cap \calM = \emptyset)$ hold. Note that the first condition is already contained within the event $\Expl_k$. We can also point out that $e_{k+1}$ is closed independently of the first two conditions. By labeling $B_{k+1}=(w_{k+1}$ is connected to a green vertex by open edges that do not belong to $\{e_1,\dots,e_{j}\})$, for all $(\omega,f)$ that satisfy $\Expl_k\cap(\calC_o\cap\calM=\emptyset)$, we can write 

$$\Pro^{G_q^n}_p(e_{k+1}\text{ piv.}\big|\Expl_k')\leq \Pro^{G_q^n}_p(B_{k+1}\big|\Expl_k')$$ 
To conclude our proof, we will use Lemma $\ref{lem:coupl}$ and therefore it suffices to prove that $\Pro^{G_q^n}_p[B_{k+1}\big|\Expl_k']\leq 2m_h(p)$. 
To this end, let us point out that if we assume that $e_{k+1}$ is closed, then $\Expl'_{k}$ prescribes the connected component $\calC_o$. Since $B_{k+1}$ forces us to avoid edges that were part of $\calC_o$ (including $e_{k+1})$, we have to connect $w_{k+1}$ to $\calM$ in the remaining graph $\widetilde{G_q}$ after $\calC_o$. The only issue is that, since $e_{k+1}$ may have an incomplete $\eta$-status, $w_{k+1}$ is random vertex between $y_0$ and $y_1$. Using a union bound, we have
\begin{align*}
    \Pro^{G_q^n}_p(B_{k+1}\big|\Expl_k')&\leq\Pro_p^{\widetilde{G^n_q}}(\widetilde{\calC_{y_0}}\cap\calM\neq\emptyset)+\Pro_p^{\widetilde{G^n_q}}(\widetilde{\calC_{y_1}}\cap\calM\neq\emptyset)\\
    &\leq\Pro_p^{G^n_q}(\calC_{y_0}\cap\calM\neq\emptyset)+\Pro_p^{G^n_q}(\calC_{y_1}\cap\calM\neq\emptyset)\\\\
    &\leq 2m_h(p)
\end{align*}
by Lemma \ref{lem:trou_sto}. Since $\Pro_p^{G_q}(\calC_y\cap\calM\neq\emptyset)= m_h(p)$ we finished our proof. 

\end{proof}
Now that we have highlighted the necessity of Lemma \ref{lem:trou_sto}, we can move on to its proof. Recall that $\calC_o$ and $g_0$ are deterministic in the context of Lemma \ref{lem:trou_sto}.
\begin{proof}[Proof of Lemma \ref{lem:trou_sto}:]
Let us first define the set $\calC^{\star}_o=\widetilde{\pi}^{-1}(\pi(\calC_o))$ which is the set of vertices not in $\calC_o$ but such that their twin vertices were in $\calC_o$. We say an edge in $\widetilde{G_q}$ is type $(k)$ if exactly $k$ of its endpoints are in $\calC^{\star}_o$. This definition is unambiguous since, by definition, we already know which edges are attached to the vertices in $\calC^{\star}_o$. 
We can give some elementary properties of edges given their type. 
\begin{itemize}
    \item Type $(0)$ are the same as they were in $G_q$ because they were not affected by $g_o$ or $\calC_o$ in any way. If an edge is type $(0)$, then its twin edge is present in $\widetilde{G_q}$ and also type $(0)$
    \item Type $(1)$ edges are partially fixed. one of their endpoints has been set by $g_o$, but the other is still random between the two twin vertices. If an edge is type $(1)$, its twin edge is not present in $\widetilde{G_q}$ and was closed in $G_q$. 
    \item Type $(2)$ edges have a fully determined $\eta$-status. Their twin edge are not present in $\widetilde{G_q}$. 
\end{itemize}
We will partition the set $E(\widetilde{G_q})$ accordingly to the edge types as $E_0,E_1$ and $E_2$. If $S$ is a subset of vertices of $V(\widetilde{G_q})$, we define the edge boundary and vertex boundary of $S $ :$$\partial_ES=\big\{e\in E(\widetilde{G_q})|\,f(e)=\{x,y\}\text{ with }x\in S,y\in V(\widetilde{G_q}),y\notin S\big\}$$ $$\partial_VS=\big\{x\notin S\big|\exists e\in E(\widetilde{G_q}), f(e)=\{x,y\},y\in S\big\}$$
We have  $\partial_E \calC^{\star}_o=E_1$ by definition of $E_1$ and $\partial_E$. 
\begin{figure}[h!]
    \centering
    \includegraphics[scale=0.4]{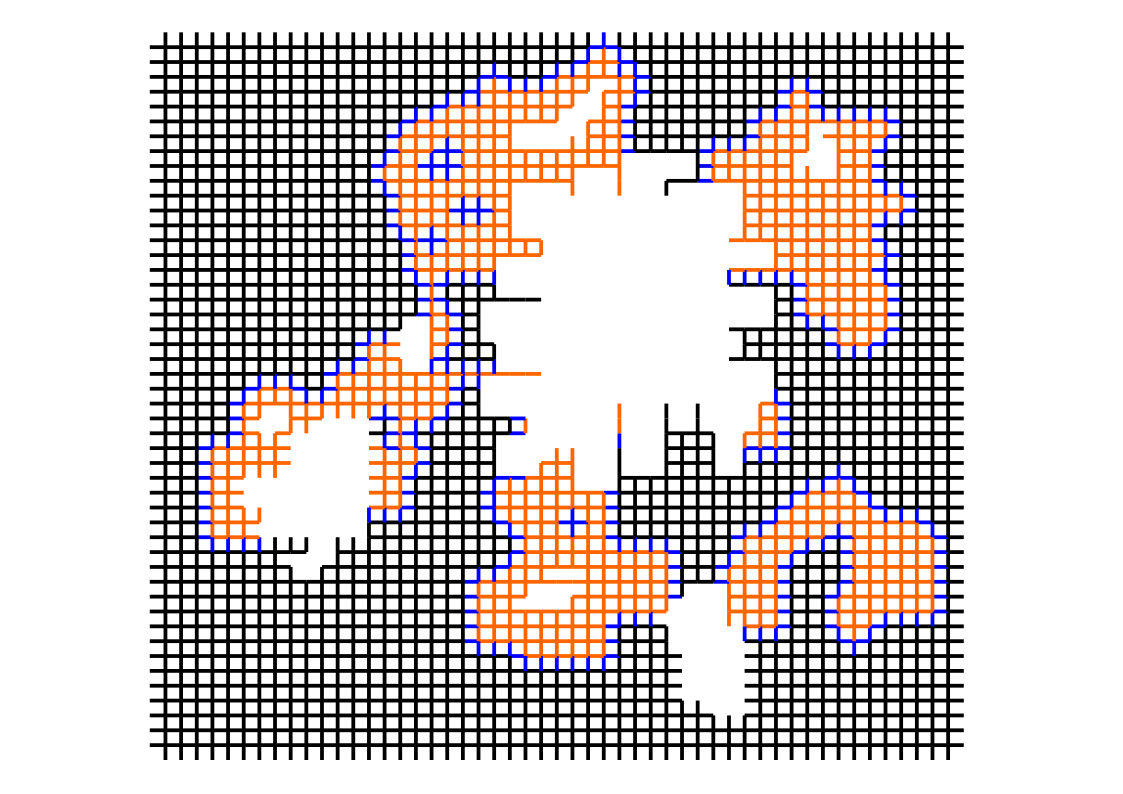}
    \caption{An example of a remaining graph $\widetilde{G_q}$ with $G=\Z^2$ seen from above.}
    \label{fig:Graphe restant}
\justifying
\noindent Since the graph in this figure is seen from above, note that an edge here represent a pair of twin edges in $\widetilde{G_q}$. Holes correspond to pairs of edges which were both explored and fully deleted in $\widetilde{G_q}$. Type $(2)$ edges are represented in orange and type $(1)$ edges in blue. Type $(0)$ edges are drawn in black.
\end{figure}

Let us take a realization $(\omega,f)$ of a percolation in $\widetilde{G_q}$, meaning for the rest of this section we will talk about which edges join which vertices deterministically. We will describe a realization $(\omega^\star,f^\star)$ of a percolation in $G_q$ that couple both variables to give a stochastic domination between the sizes of the cluster of $x$ in $\widetilde{G_q}$ and $G_q$. Let $(U_e)_{e\in V(G)}$ be a set of independent Bernoulli variables with parameter $p$ that will be used to ``fill'' our percolation when needed. 
Let $\calH=(\calC_o^\star\cup\partial_VC_o^\star,E_1\cup E_2)$. This graph is well-defined as type $(2)$ edges have both endpoints in $C^\star_o$ and type $(1)$ edges have one endpoint in $C^\star_o$ and one endpoint in $\partial_VC_o^\star$.
For type $(0)$ edges, we simply set $\omega^\star_e=\omega_e$ and $f^\star(e_i)=f(e_i)$. For the set of type $(2)$ edges, we set $f^\star$ to be defined randomly according to the construction of the distribution of $G_q$. To specify the remaining properties of our coupling, we will need to construct an association between vertices and edges in $\mathcal{H}$ to one of their lifts in $G_q$ (meaning we will designate a distinguished lift for every vertex and edges we come across). We will construct this association by exploring the graph $\mathcal{H}$.
The set $S$ (resp. $A$) will denote the set of vertices (resp. edges) we have already explored in $\widetilde{G_q}$. Our process will continue until $S=C^\star_o$ The function $a:\calC^{\star}_o\cup E_1\cup E_2\to V(G_q)\cup E(G_q)$ 
will be our association, constructed along the way. We fix an arbitrary ordering of the vertices and edge of $\mathcal{H}$.

Since twin edges of type $(1)$ edges and type $(2)$ edges 
are not  in $\widetilde{G_q}$, we may drop their subscript when mentioning them. This is also the case for vertices in $\calC^{\star}_o$. This notation allows us to distinguish vertices and edges in $G_q$ with their counterpart in $\widetilde{G_q}$. 

First, pick the smallest vertex $r$ in $\calC^{\star}_o$
and a corresponding $r_{\delta}$ in $G_q$ where $\delta$ is Bernoulli variable with parameter $1/2$. We associate $r$ to $r_\delta$ and add $r$ to $S$. Then we apply the following iterative process :
\begin{itemize}
    \item We pick the smallest edge $e$ in $E_2$ that is not yet in $A$ and which has at least one endpoint $x$ in $S$ (we choose $x$ as the smallest endpoint if both are in $S$). If such an edge does not exist, we immediately go to the last step. 
    \item Let $e_i$ the lift of $e$ that has $a(x)$ as an endpoint, we call $y$ the other endpoint of $e$. We set $a(e)=e_i$. 
    \item If the edge $e$ is open and if $a(y)$ is not defined, then we set it equal to the other endpoint of $e_i$ and add $y$ to $S$. 
    \item We add $e$ to $A$.
    \item If there is no remaining edge to be picked for the first step, it means we explored all the connected component of the root $r$ in $\calH$. If $S\neq C^\star_o$ we pick a new root $r'$, the smallest vertex in $C^\star_o$ not yet in $S$ and a random lift $ r'_{\delta'}$ and start again at the first step. If $S=C^\star_o$, we are done with process
\end{itemize}
Once we are done, we have constructed the function $a$ and can therefore set $\omega^\star_{a(e)}=\omega_e$. We also set $\omega^\star_{T(a(e))}=U_e$. The only remaining task in our construction is to set the $(\omega,f)$-status of type $(1)$ edges. Let $e$ be a type $(3)$ edge such that $f(e)=\{x,y_j\}$ with $x\in V(E_2)$ and $y_j\notin V(E_2)$. Note that we still specify the height of $y_j$ as outside $V(E_2)$, $y_j$ and $T(y_j)$ are present in $\widetilde{G_q}$. With probability $1/2$, we set $a(e)$ to be either $e_1$ or $e_2$. We then define the $(\omega,f)$-status of $e_i$  by 
$$\left\{
\begin{array}{ll}
       f^\star(a(e))=\{a(x),y_j\}\text{ and }f^\star(T(a(e))=\{T(y_{j}),T(a(x))\}\\
       \omega^\star_{a(e)}=\omega_e
\end{array}
\right.$$
We also set $\omega_{T(e_i)}=U_e$. 
Note that every time a vertex is discovered, it is from a certain edge that points toward this vertex. We then declare this edge to be a ``pioneer edge'' and orient it toward the new vertex. The set of pioneer edges forms an oriented spanning forest of the open connected components of $\calC_o^\star$. For any vertex in $\calC_o^\star$, there is a unique oriented path from a root to this vertex. Note that by construction, $a$ lifts this oriented forest in $G_q$.

\begin{figure}[h!]
    \centering
    \includegraphics[scale=0.30]{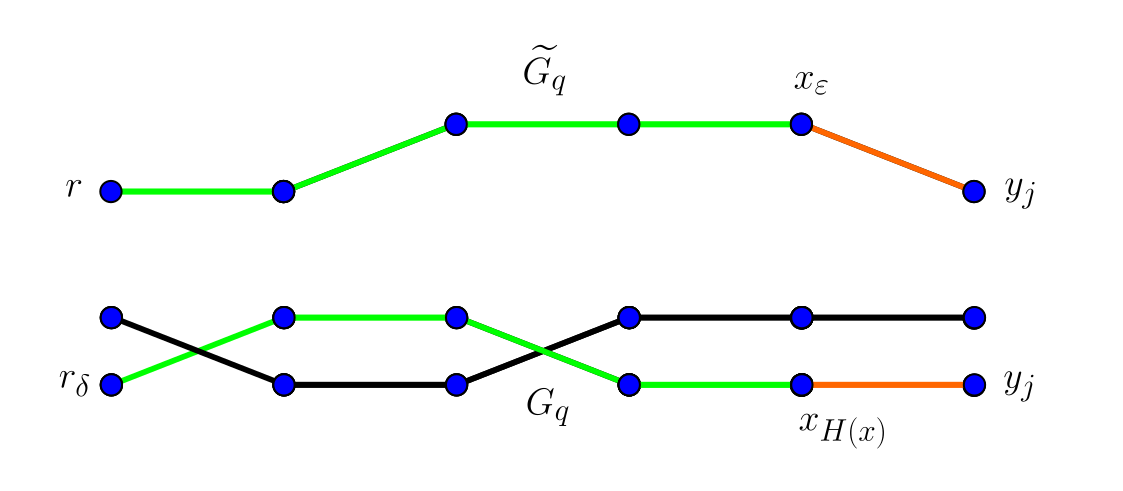}
    \caption{The geometry of a type $(1)$ edge (in green) and its lift depends on the geometry of the paths from the roots $r$ and $r_{\delta}$}. The random variable $H(x)$ describes the height of the vertex associated to $x_{\varepsilon}$.
    \label{fig:Article orientation}
\justifying
\noindent 
\end{figure}

Let us now explain why the construction we described follows the law of $G_q$. We will extend the notion of type of edges in $G_q$ by saying an edge is of type $(k)$ if exactly $k$ of its endpoint are in $\pi^{-1}(\pi(\calC_o))$. This definition is unambiguous, as we can determine the type of edge in $G_q$ without not knowing its ``exact endpoints''.
Note that the type of pairs of edges is still a deterministic notion within the proof of Lemma \ref{lem:trou_sto}
\begin{itemize}
    \item The $(\omega^\star,f^\star)$ status of type $(0)$ edges in $G_q$ is copied from its status in $\widetilde{G_q}$ and therefore has the right joint distribution.
    \item Independently of the status of type $(0)$ edges, the $f^\star$-status of type $(2)$ edges in $G_q$ were set according to the distribution of $G_q$.
    \item The $\omega^\star$-status of type $(0)$ edges was partially set as a lift of the percolation in $\widetilde{G_q}$ and completed with independent Bernoulli variable (similarly to common proofs in percolation, as in \cite{BS96}). The result is therefore  a set of independent Bernoulli variables independent of the state of type $(0)$ edges and the $f^\star$-status of type $(2)$ edges. 
    \item We now look at the crucial case of the $f^\star$-status of type $(1)$ edges. For this explanation, we briefly go back to our description of $G_q$ with the $\eta_e$ variables (we will label $\eta^\star$ the variables that affect the geometry of $G_q$). First, note that this $f^\star$-status is independent of the status of type $(0)$ edges.
    If we write $f(e)=\{x_{\varepsilon},y_{j}\}$ and $f^\star(e_i)=\{y_{j},x_{H(x)}\}$, we can compare the height of the vertices that $e$ and $e_i$ join (see Figure \ref{fig:Article orientation}) to write:$$\eta_e+\varepsilon=\eta^\star_{e}+H(x)$$ in $\Z/2\Z$. The random variable $H(x)$ depends on the variables $\eta^\star_h$ (but independent of $\eta^\star_e$ !) when one follows the oriented path from $r_{\delta}$ to $x_{\varepsilon}$ in $G_q$. Note that all the dependency of $\eta_e^\star$ on the status of type $(2)$ edges is confined to the variable $H(x)$. 
    Because $\eta_e$ is a Bernoulli variable with parameter $1/2$,  $\eta^\star_{e}$ a Bernoulli variable with parameter $1/2$ and independent of $H(x)$. Therefore, $\eta_e^\star$ is independent of the status of type $(2)$ edges. The transition from $\eta^\star_{e}$ to $f^\star$ is done by choosing either $e_1$ or $e_2$ randomly. Also, by this construction, all $\eta_e^\star$ for $e\in E_1$ are independent of each other. 
    \item Finally, the $\omega^\star$-status of type $(1)$ is a Bernoulli percolation with parameter $p$ for the same reason, we have the correct distribution on the $\omega^\star$-status of type $(2)$ edges. It is independent of the status of type $(0)$ and $(2)$ edges, as well as the $f^\star$-status of type $(1)$ edges. 
\end{itemize}
\begin{remark}
    The penultimate step is the reason we are proving sharpness for $q=1/2$ only. In this case only, $\calB(q)$ becomes the Haar measure on $\Z/2\Z$, it is the absorbing element for convolution and allows $\eta_{e}$ to be independent of $H(x)$ (because it is independent of $\eta_e^\star$)
\end{remark}

Finally, we explain the stochastic domination between the size of the connected component of $x$ in $\widetilde{G_q}$ and $G_q$. First, note that any type $(0)$ edge has the same $(\omega,f)$ status in both graphs. For any open type $(2)$ or $(1)$ edges, we can injectivly associate an open edge in $G_q$ through the function $a$. This injection keeps open edges in $\widetilde{G_q}$ open in $G_q$. In particular, this implies it is more probable for $x$ to meet a green vertex in $G_q$ than in $\widetilde{G_q}$ and gives us the desired inequality. \end{proof}

\begin{remark}
The result is not per se a stochastic domination directly between the clusters because while we can associate any open path with type $(2)$ and $(3)$ edges, an open path in $G_q$, the resulting path may have a different geometry as in Figure \ref{fig:Article orientation}. Another reason not captured by Figure \ref{fig:Article orientation} is the fact that $a$ may ``break'' cycles of edges in $\widetilde{G_q}$ while transporting them in $G_q$. 
\end{remark}

The fact that we only proved the exponential decay at $q=1/2$ poses the following question. 
\begin{question}
    Does Theorem \ref{Thm:Sharp} still holds when $q\neq 1/2$ ? 
\end{question}
Since $G_q$ for $q=0$ or $q=1$ is a deterministic transitive graph, the question is already but it remains open for $q\in(0,1/2)\cup(1/2,1)$. 
While we believe the answer is positive, we also believe Lemma \ref{lem:trou_sto} does not hold in certain cases when $q\neq1/2$. Therefore, a new strategy is needed for such a proof.
\paragraph{Acknowledgements}

We are grateful to Paul Rax for pointing out a mistake in an earlier version of the proof of continuity. We would like to give credit to Christoforos Panagiotis for the distinction Quenched/Annealed concerning the exponential decay.
We also want to thank Hugo Vaneuville for the explanation he gave us on his proof of the exponential decay through coupling.  
This work originates from a research internship under the supervision of Sébastien Martineau. We are grateful to him for his help and advice during the internship, as well as during the process of writing of this article.  

\printbibliography

\end{document}